\DeclarePairedDelimiterX\setc[2]{\{}{\}}{\,#1 \;\delimsize\vert\; #2\,}
\numberwithin{equation}{section} 
\newtheorem{theorem}{Theorem}[section]
\newtheorem{corollary}{Corollary}[theorem]
\newtheorem{lemma}[theorem]{Lemma} 
\newtheorem{assumption}[theorem]{Assumption} 
\newtheorem{proposition}[theorem]{Proposition} 
\newtheorem{remark}[theorem]{Remark} 
\def\keyname{\textbf{Keywords}\enspace}
\def\key#1{\par\addvspace\medskipamount{\rightskip=0pt plus1cm
\def\and{\ifhmode\unskip\nobreak\fi\ $\cdot$
}\noindent\keyname\ignorespaces#1\par}}
\def\JELname{\textbf{JEL Classification}\enspace}
\def\JEL#1{\par\addvspace\medskipamount{\rightskip=0pt plus1cm
\def\and{\ifhmode\unskip\nobreak\fi\ $\cdot$
}\noindent\JELname\ignorespaces#1\par}}
\def\MSCname{\textbf{2010 MSC}\enspace}
\def\MSC#1{\par\addvspace\medskipamount{\rightskip=0pt plus1cm
\def\and{\ifhmode\unskip\nobreak\fi\ $\cdot$
}\noindent\MSCname\ignorespaces#1\par}}
\def \B {\mathcal B}
\def \F {\mathcal F}
\def \bP {\mathcal P}
\def \R {\mathcal R}
\def \bF {\mathbb F}
\def \bH {\mathbb H}
\def \bE {\mathbb E}
\def \bL {\mathbb L}
\def \bN {\mathbb N}
\def \P {\mathbb P}
\def \bQ {\mathbb Q}
\def \bR {\mathbb R}
\def \bS {\mathbb S}
\newcommand{\ud}{\mathrm d}
\title{A change of measure formula for recursive conditional expectations}
\author{Luca Di Persio \thanks{Department of Computer Science, University of Verona, via Ca' Vignal 2, 37129 Verona, Italy. E-mail: luca.dipersio@univr.it} \and Alessandro Gnoatto \thanks{(Corresponding Author) Department of Economics, University of Verona, via Cantarane 24, 37129 Verona, Italy. E-mail: alessandro.gnoatto@univr.it} \and Marco Patacca\thanks{Department of Economics, University of Verona, via Cantarane 24, 37129 Verona, Italy. E-mail: marco.patacca@univr.it}}
\date{\today}
\begin{document}
\maketitle

\begin{abstract}
We derive a representation for the value process associated to the solutions of forward-backward stochastic differential equations in a jump-diffusion setting under multiple probability measures. Motivated by concrete financial problems, the latter representations are then applied to devise a generalization of the change of numéraire technique allowing to obtain recursive pricing formulas in the presence of non-linear funding terms due to e.g. collateralization agreements.
	 \key{Pricing \and Change of measure \and BSDE \and recursive conditional expectation \and non-linear valuation }
 \JEL{C63 \and G12 \and G13}
\end{abstract}

\section{Introduction and Motivation}
Let $\P$ and $\bQ$ be two probability measures on the measurable space $\left( \Omega,\F\right)$. From the Radon-Nikodym theorem, it is well known that $\bQ\ll\P$ on $\left( \Omega,\F\right)$ if and only if there exists an $\F$-measurable and non negative random variable $ \mathcal{E} : \Omega \rightarrow \bR^+$ with $\bE^{\P}\left[ \mathcal{E}\right]=1$, such that, for any random variable $X\in L^1\left(\Omega,\F,\bQ\right)$ we have
\begin{equation}\label{eq:RadNyk}
    \bE^{\bQ}\left[X\right]=\bE^{\P}\left[ \mathcal{E} X\right] \; , \; \bQ - a.s.
\end{equation}
The previous result can be generalized to conditional expectations,  which is sometimes referred to as the ``abstract Bayes rule''. 
More specifically, if $\mathcal{G}$ is a $\sigma$-algebra with $\mathcal{G}\subseteq\F$ we have
\begin{equation}\label{eq:bayesRule}
        \bE^{\bQ}\left[X|\mathcal{G}\right]=\frac{\bE^{\P}\left[ \mathcal{E} X|\mathcal{G}\right]}{\bE^{\P}\left[ \mathcal{E}|\mathcal{G}\right]} \; , \; \bQ - a.s.
\end{equation}
The above relations play a relevant role in several applications, particularly in Mathematical Finance, see e.g. \cite{bjork2020}. Within this setting, the likelihood ratio $ \mathcal{E}$, due to the Girsanov theorem, is usually given by the Doléans-Dade stochastic exponential driven by possibly discontinuous semimartingales. Such tools are well understood for standard conditional expectations, while the case when the conditional expectation has a recursive structure is less clear. For the sake of clarity, let us first introduce such situation in an informal way. 

By recursive expectation, we mean an expression of the following form
\begin{equation}\label{eq:condExp}
    Y_t=\bE^{\bQ}\left[ g(X_T)+\int_t^T f(s,X_s,Y_s,Z_s)\ud s|\F_t \right],
\end{equation}
where the functions $g,f$ and the processes $X,Y$ and $Z$ are {\it regular enough}, also having implicitly introduced the filtration $\bF=(\F_t)_{0\leq t\leq T}$, $T>0$  being a fixed time horizon. Such filtration is generated by the process $X\coloneqq\lbrace X_t, t \in [0,T]\rbrace$, typically assumed to be an It\^{o} semimartingale. 

Expectations like \eqref{eq:condExp} naturally arise from the study of Forward-Backward Stochastic Differential Equations (FBSDEs) of the form
\begin{align}\label{eq:fbsdes}
    \ud X_t & = \mu(t,X_t)\ud t + \sigma(t,X_t)\ud W_t^{\bQ} \quad , \quad X_0 = x \\
    -\ud Y_t & = f(t,X_t,Y_t,Z_t)\ud t - Z_t\ud Z_t^{\bQ} \quad , \quad Y_T=g(X_T) \, ,
\end{align}
where all vector field functions are assumed to be {\it sufficiently regular}, while $W^{\bQ}$ is a standard Brownian motion under the reference measure $\bQ$. For a background on FBSDEs, we refer the reader to \cite{bismut1973}, \cite{pardouxPeng1990}, \cite{el1997backward}, \cite{maYong1999} and \cite{delong2013backward} among others.

The aim of this work is to provide  a change of measure formula for conditional expectations of the form \eqref{eq:condExp} associated to FBSDEs of the form \eqref{eq:fbsdes} within a Markovian Jump Diffusion setting. We derive such result in Theorem \ref{th:mainTh} which constitutes the main result of the present paper.

Our motivation to consider this type of recursive conditional expectations is mainly steered by financial applications. As a response to the financial crisis, the valuation of contingent claim has been generalized along several directions.
For example \citet{piterbarg2010funding} obtained, starting from a Black-Scholes type replication argument and applying the Feynman-Ka\v{c} formula, the following recursive expression for the price of a contingent claim when collateral is exchanged:
\begin{equation}\label{eq:piterbarg}
        Y_t=\bE^{\bQ}\left[ e^{-\int_t^T r_u^c\ud u} Y_T-\int_t^T e^{-\int_t^T r_s^c\ud s} (r_u^f-r_u^c)(C_u-Y_u)\ud u\big|\F_t\right]\,,
\end{equation}
where the collateral $C$ and the short rates $r^c,r^f$ are sufficiently regular, such that the above expression is well posed: in applications, $C$ is assumed to be a Lipschitz function of $Y$.
Expressions of the aforementioned form have been then justified via BSDE-techniques in, e.g.,  \citet{bielecki2015valuation}, \citet{bichuch2018arbitrage}, \cite{BRIGO2019788}, \citet{biagini2021unified} and \cite{gnoattoSeiffert2021}.
Such recursive expectations are now well understood, and are at least approximately implemented 
in several industry-standard software packages via a combination of quadrature formulas for the time integral, and Monte Carlo simulations for the expectation. 
To the best of our knowledge, a comprehensive study of the change of measure/change of numéraire technique applied to such conditional expectations, is still missing. 
It is worth mentioning that in \cite{cohen2015stochastic} the authors observed that linear BSDEs can be used to represent a measure change, also providing the formulation of a linear BSDE under two probability measures. The conditional expectation they obtained does not exhibit a recursive structure since the integrand is a predictable process not depending on the value process nor on the controls. We take this idea one step further. Indeed, we consider a fairly general structure of the driver which is relevant for the aforementioned applications, allowing us to generalize the ``change of numéraire'' technique, see \citet{geman1995changes}, so as to also cover the recursive conditional expectation case.\\

The paper is organised as follows: in Section \ref{sec:NotationAndSetting} we present the probabilistic setup, whereas in Section \ref{sec:bsdeJumps} we derive  our main result, i.e. Theorem \ref{th:mainTh}, subsequently, in Section \ref{sec:FinancialApplications}, we provide some financial applications. 
Concerning these latter, we exploit our Theorem \ref{th:mainTh} to analyse the market model introduced in \citet{bielecki2015valuation} focusing on the change from the physical to the risk-neutral measure, allowing us also to emphasize the  link between our results and the benchmark approach proposed by \citet{platen2006benchmark}. We also show how our method can be used to revisit the valuation of exchange options in a setting including collateral, permitting us to deduce a general procedure to perform changes of numéraire in non-linear market models. A simple example of market driven by a jump process is also provided.

\section{Notations and setting}\label{sec:NotationAndSetting}
Let $T\in\bR_+$,  $T<\infty$, be a fixed horizon time. We consider a filtered probability space $(\Omega,\F,\bF,\P)$, where the filtration $\bF\coloneqq\lbrace \F_t, t\in \left[0,T\right]\rbrace$ is assumed to be complete and right continuous. For all semimartingales considered in the sequel, we consider a version with right continuous paths and left limits.

Let $d\in \bN^+$. We assume that the filtered probability space supports an $\bR^d$-valued Brownian motion $W^\P\coloneqq \lbrace W_t^\P,t\in[0,T]\rbrace$, for a finite positive $T$, and a Poisson random measure $N$, with compensator $\vartheta(\ud t, \ud z)$ on $\bR^d\setminus\lbrace 0\rbrace$, while $\vartheta$ is $\sigma$-finite and such that $\int_0^T\int_{\bR^d\setminus\lbrace 0\rbrace}(1\wedge|z|^2)\vartheta(\ud t, \ud z) <\infty$. Moreover, we define
the compensated random measure $\tilde{N}$, associated to $N$, as follows 
\begin{equation}
    \tilde{N}(\ud t, \ud z) \coloneqq N(\ud t, \ud z)- \vartheta(\ud t, \ud z),
\end{equation}
and we introduce the following space of processes
\begin{itemize}
    \item $\bP$: the predictable $\sigma$-field on $\Omega \times \left[0,T\right]$ generated by all left continuous and $\bF$-adapted processes;
    \item $\bL_T^2(\bR^d)$: the space of all $\F_T$-measurable random variables $X:\Omega\rightarrow \bR^d$ satisfying $||X||^2\coloneqq\bE^{\P}(|X|^2)<+\infty$;
    \item $\bH_T^2(\bR^d)$: the space of all predictable process $\phi:\Omega \times \left[0,T\right]\rightarrow\bR^d$ such that $||\phi||^2\coloneqq\bE^{\P}\left[ \int_0^T|\phi_t|^2\ud t\right]<+\infty$.
\end{itemize}
As previously mentioned, the filtration $\bF$ supports both the Brownian motion $W$ and the Poisson random measure $N$. 
Let us provide the statement of the predictable representation property in the present setting.

\begin{lemma}\label{lem:repProp}
{[}See Lemma \Romannum{3}, 4.24 in \citet{jacod2003limit}{]} Any $\bF$-local martingale $M$ has the representation
\begin{equation}
    M_t = M_0 + \int_0^tZ_s\ud W_s^\P + \int_0^t\int_{\bR^d}U_s(z)\tilde{N}(\ud s,\ud z) \; , \; 0\leq t\leq T \; ,
\end{equation}
where $Z$ and $U$ are $\F$-predictable processes integrable with respect to both $W$ and $\tilde{N}$.
\end{lemma}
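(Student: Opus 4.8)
Although the statement is quoted directly from \citet{jacod2003limit}, let me outline the route I would follow to establish it. The plan is to reduce to square-integrable martingales and then run a Hilbert-space density argument. By localisation it suffices to prove the representation for martingales $M$ with $M_T\in\bL_T^2(\bR^d)$: a general $\bF$-local martingale is recovered through a localising sequence of stopping times together with a limiting procedure, the integrands $Z$ and $U$ being glued consistently across the stochastic intervals. Working in the Hilbert space $\H^2_0$ of $\P$-square-integrable martingales that are null at the origin, I would introduce the stable subspace $\mathcal{S}\subseteq\H^2_0$ defined as the closure of the set of stochastic integrals
\begin{equation}
t\longmapsto \int_0^t Z_s\,\ud W_s^\P + \int_0^t\int_{\bR^d}U_s(z)\,\tilde{N}(\ud s,\ud z),\qquad Z\in\bH_T^2(\bR^d),\ U\ \text{predictable and }\tilde{N}\text{-integrable}.
\end{equation}
The strong orthogonality of the Brownian and the compensated-Poisson integrals, combined with the It\^o isometry for each of them, shows that $\mathcal{S}$ is a closed subspace; it therefore remains to prove that its orthogonal complement in $\H^2_0$ is trivial.

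The crux is the identity $\mathcal{S}^\perp=\{0\}$. The decisive ingredient is a totality result: because $\F_T$ coincides, up to $\P$-null sets, with the $\sigma$-field generated by $W^\P$ and $N$, the terminal values of the Dol\'eans-Dade exponentials
\begin{equation}
\doleans{\int_0^\cdot h_s\,\ud W_s^\P + \int_0^\cdot\int_{\bR^d}\bigl(e^{g_s(z)}-1\bigr)\,\tilde{N}(\ud s,\ud z)}_T,
\end{equation}
taken over bounded deterministic integrands $h$ and $g$, form a total subset of $\bL_T^2(\bR^d)$. Each such exponential solves a linear SDE driven by $W^\P$ and $\tilde{N}$, so that the exponential martingale minus its initial value belongs to $\mathcal{S}$; hence, for $M\in\mathcal{S}^\perp$ with $M_0=0$, the inner product in $\H^2_0$ yields $\bE^\P[M_T\,\mathcal{E}_T]=0$ for every such exponential $\mathcal{E}$. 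By totality $M_T=0$, whence $M\equiv 0$, so that $\mathcal{S}=\H^2_0$ and the representation follows on the square-integrable class.

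The step where the work concentrates — and the main obstacle — is the totality claim together with the correct bookkeeping of the two orthogonal martingale components. Proving that the exponentials are total in $\bL_T^2(\bR^d)$ relies on the independence and stationarity of the increments of $(W^\P,N)$, typically via a monotone-class argument reducing the question to products of functionals over disjoint time increments; this is exactly where the hypothesis that $\bF$ is generated by $W^\P$ and $N$ is indispensable, since otherwise $\mathcal{S}^\perp$ need not vanish. A secondary technical point is the simultaneous handling of the continuous and the purely discontinuous parts: one must check that strong orthogonality to $\mathcal{S}$ forces both the vanishing of the bracket of the continuous part of $M$ against $W^\P$ and the vanishing of the predictable jump pairing against $\tilde{N}$, so that $M$ carries neither a Brownian nor a compensated-jump ingredient. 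An alternative, and the route effectively adopted in \citet{jacod2003limit}, sidesteps this by identifying the predictable representation property with uniqueness of the solution to the martingale problem associated with the deterministic characteristics of $(W^\P,N)$: their deterministic nature yields uniqueness, and the general equivalence between uniqueness and the representation property then delivers the claim.
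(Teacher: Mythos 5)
The paper offers no proof of this lemma at all: it is imported verbatim as Lemma III.4.24 of \citet{jacod2003limit}, so there is nothing internal to compare your argument against. Your sketch is nonetheless a legitimate and standard route to the result --- localisation to the square-integrable class, the stable-subspace/It\^o-isometry argument, and the totality of Dol\'eans-Dade exponentials in $\bL_T^2$ --- and it is in fact closer to the Hilbert-space proof one finds in, e.g., Delong's monograph or L{\o}kka's work on L\'evy filtrations than to the route actually taken in the cited reference, which you correctly identify in your last sentence: Jacod and Shiryaev derive the representation property from uniqueness of the solution to the martingale problem with deterministic characteristics. You also rightly flag the hypothesis that the paper leaves implicit, namely that $\bF$ must be the ($\P$-augmented) filtration \emph{generated} by $(W^{\P},N)$; merely ``supporting'' these drivers, as the paper's Section 2 literally says, is not enough for the representation to hold, so your remark that $\mathcal{S}^{\perp}$ need not vanish otherwise is a genuine and worthwhile correction to the paper's phrasing.

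Two caveats on your outline. First, the reduction ``by localisation it suffices to treat $M_T\in\bL_T^2$'' is not as routine as you present it: a general local martingale need not be locally square-integrable when it has large jumps, so one must either first split off the large-jump part (using that all jumps of $M$ are carried by $N$) or work in $\mathcal{H}^1$ with a density argument; this is precisely the technical burden that the martingale-problem formulation in \citet{jacod2003limit} is designed to absorb, and a complete write-up along your Hilbert-space route would have to address it explicitly. Second, calling $\mathcal{S}$ ``closed because it is a closure'' and then separately invoking the isometry is slightly circular as written; the isometry is what shows the \emph{un}closed set of stochastic integrals is already closed, which is what you actually need to conclude that every element of $\mathcal{S}$ is itself a stochastic integral. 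Neither point invalidates the plan, but both are where the real work lies.
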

For the sake of completeness, let us also further
specify the structure of the random measure:
\begin{assumption}\label{as:1}
$N$ is an integer-valued random measure with compensator 
\begin{equation}
    \vartheta(\ud t, \ud z) = Q(t, \ud z)\eta(t) \ud t
\end{equation}
where $\eta:\Omega\times [0,T]\rightarrow[0,\infty)^d$ is a predictable process and $Q$ is a kernel from $(\Omega\times [0,T],\bP)$ into $(\bR^d,\B(\bR^d))$ satisfying
\begin{equation}
    \int_0^T\int_{\bR^d}|z|^2Q(t,\ud z)\eta(t)\ud t <\infty \; ,
\end{equation}
and we set $N(\lbrace 0\rbrace ,\bR^d)=N((0,T],\lbrace 0 \rbrace ) = \vartheta ((0,T],\lbrace 0 \rbrace ) = 0$.
\end{assumption}

Following \cite{delong2013backward}, we then introduce the standard spaces that allow us to properly define the concept of solution to FBSDEs with jumps. We define
\begin{itemize}
    \item $\bH_{T,N}^2(\bR^d)$, the space of all predictable process $U:\Omega \times \left[0,T\right]\times\bR^d\rightarrow\bR^d$ satisfying $\bE^{\P}\left[ \int_0^T\int_{\bR^d}|U_t(z)|^2Q(t,\ud z)\eta(t)\ud t\right]<+\infty$, the integral being considered w.r.t. to the aforementioned predictable compensator $\tilde{N}$;
    \item $\bS_{T}^2(\bR^d)$ the space of $\bF$-adapted càdlàg processes $Y$, $Y:\Omega\times [0,T]\rightarrow\bR^d$ satisfying $\bE^{\P}\left[\sup\limits_{t\in [o,T]}|Y_t|^2\right]<+\infty$.
\end{itemize}

\section{BSDE with Jumps}\label{sec:bsdeJumps}
Let us outline the FBSDE we consider in the present work.

\subsection{Forward SDE}
Let $T \in \mathbb{R}^+$, $T<\infty$. We consider the following $\bR^d$-valued forward SDE
\begin{multline}\label{eq:forSDE}
    X_s = x+\int_t^s\mu(X_{r-})\ud r + \int_t^s\sigma(X_{r-})\ud W_r^\P \\
    + \int_s^t\int_{\bR^d}\gamma(x_{r-},z)\tilde{N}(\ud r,\ud z) \; , \; 0\leq t\leq s\leq T \; ,
\end{multline}
meaning that we are considering the following system of $\bR$-valued  SDEs
\begin{multline}
    X_s^i = x^i+\int_t^s\mu^i(X_{r-})\ud r + \int_t^s\sigma^i(X_{r-})\ud W_r^\P \\
    + \int_s^t\int_{\bR^d}\gamma^i(x_{r-},z)\tilde{N}(\ud r,\ud z) \; , \quad 0\leq t\leq s\leq T \; , \; 1\leq i\leq d .
\end{multline}
Here, the mappings $\mu^i:\bR^d\rightarrow \bR$, $\sigma^i:\bR^d\rightarrow \bR$, $\gamma^i:\bR^d\times\bR^d\rightarrow \bR$ are all assumed to be measurable for any $i \in \{1, \ldots,n\}$.

Moreover, for any $i \in \{1, \ldots,n\}$ and $K$ denoting, for better readability, a {\it universal} constant such that all below definitions are well posed, we assume that:
\begin{itemize}
    \item[\textbf{(A1)}] the functions $\mu^i$ and $\sigma^i$ are Lipschitz continuous;
    \item[\textbf{(A2)}] the functions $\gamma^i$ are measurable and satisfy 
    \begin{align*}
        |\gamma^i(x,z)|&\leq K(1\wedge |z|) \; , \; (x,z) \in \bR^d\times\bR^d \\
        |\gamma^i(x,z)-\gamma^i(x',z)|&\leq K|x-x'|(1\wedge |z|) \; , \; (x,z), (x',z) \in \bR^d\times\bR^d ;\\
    \end{align*}
    \item[\textbf{(A3)}] $N$ indicates a Poisson random measure generated by a Lévy process with Lévy measure $\nu$, namely:
    $\nu(\{0\})=0$ and $\int_{\bR^d}1\wedge |z|^2 \nu(\ud z)<\infty$.
\end{itemize}
It is worth noting that
Assumption (A3) allows us to simplify Assumption \ref{as:1}. Indeed, setting: 
$\eta(t)\equiv1$ and $Q(t,\ud z)=\nu^{\P}(\ud z)$ the jumps are those of a time-homogeneous Lévy process.
Furthermore, we denote with $(X_s^{t,x})_{t\leq s\leq T}$ the solution to \eqref{eq:forSDE} with initial state $(t,x)$.

\begin{theorem}\label{th:solExSDE}
Under Assumptions \textbf{(A1)}-\textbf{(A3)}
\begin{itemize}
    \item[a)] For each $(t, x)\in [0,T]\times\bR^d$ there exists a unique adapted, càdlàg solution $X^{t,x}\coloneqq (X_s^{t,x})_{t\leq s\leq T}$ to \eqref{eq:forSDE}.
    \item[b)] The solution $X^{t,x}$ is a homogeneous Markov process.
\end{itemize}
\end{theorem}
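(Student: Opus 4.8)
The plan is to treat part a) as a fixed-point problem and part b) as a consequence of pathwise uniqueness together with the independence and stationarity of the increments of the driving noise. For existence and uniqueness I would fix $(t,x)$ and define the solution map $\Phi$ on $\bS_T^2(\bR^d)$ by
\begin{equation}
\Phi(Y)_s = x + \int_t^s \mu(Y_{r-})\ud r + \int_t^s \sigma(Y_{r-})\ud W_r^\P + \int_t^s\int_{\bR^d}\gamma(Y_{r-},z)\tilde N(\ud r,\ud z),\quad t\le s\le T,
\end{equation}
and construct the Picard iterates $X^{(0)}_s \equiv x$, $X^{(n+1)} = \Phi(X^{(n)})$. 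The first task is to check that $\Phi$ maps $\bS_T^2(\bR^d)$ into itself: the Lipschitz hypothesis \textbf{(A1)} yields linear growth $|\mu(y)|+|\sigma(y)| \le C(1+|y|)$, while \textbf{(A2)} gives $\int_{\bR^d}|\gamma(y,z)|^2\nu(\ud z) \le K^2\int_{\bR^d}(1\wedge|z|)^2\nu(\ud z)$, which is finite by \textbf{(A3)} because $(1\wedge|z|)^2 = 1\wedge|z|^2$. Applying the Burkholder–Davis–Gundy inequality to the Brownian integral and the isometry together with BDG for the compensated-jump integral then bounds $\bE^{\P}[\sup_{t\le s\le T}|\Phi(Y)_s|^2]$ in terms of $1+\bE^{\P}[\sup_{t\le s\le T}|Y_s|^2]$.

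For convergence I would estimate the increments of the iteration. Writing $g_n(s) = \bE^{\P}[\sup_{t\le r\le s}|X^{(n+1)}_r - X^{(n)}_r|^2]$ and using the Lipschitz bounds from \textbf{(A1)}--\textbf{(A2)}, the decisive estimate is the one for the jump term, where the compensated-integral isometry under Assumption \textbf{(A3)} (so that $\eta\equiv 1$ and $Q=\nu$) gives
\begin{equation}
\bE^{\P}\Big[\int_t^s\int_{\bR^d}|\gamma(X^{(n)}_{r-},z)-\gamma(X^{(n-1)}_{r-},z)|^2\nu(\ud z)\ud r\Big] \le K^2\Big(\int_{\bR^d}(1\wedge|z|)^2\nu(\ud z)\Big)\int_t^s g_{n-1}(r)\ud r.
\end{equation}
Combining this with the analogous drift and diffusion estimates produces $g_n(s) \le C\int_t^s g_{n-1}(r)\ud r$, whence $g_n(T)\le (CT)^n g_0(T)/n!$; summability of this bound makes $(X^{(n)})$ Cauchy in $\bS_T^2(\bR^d)$, and its limit $X^{t,x}$ is the desired adapted càdlàg solution. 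Uniqueness follows by applying the same chain of estimates to the difference of two solutions and invoking Gronwall's lemma. I expect the main obstacle to be precisely these jump-integral estimates: one must keep careful track of the fact that the growth and Lipschitz controls on $\gamma$ in \textbf{(A2)} are weighted by $(1\wedge|z|)$, so that integrability against $\nu$ is exactly what \textbf{(A3)} provides, and one must use BDG rather than merely the isometry in order to reach the $\sup$-norm required by $\bS_T^2(\bR^d)$.

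For part b), the homogeneous Markov property I would derive from pathwise uniqueness. Uniqueness yields the flow identity $X_s^{t,x} = X_s^{u,X_u^{t,x}}$ for $t\le u\le s\le T$, obtained by observing that both sides solve \eqref{eq:forSDE} on $[u,s]$ with the same initial datum $X_u^{t,x}$. Since the coefficients $\mu,\sigma,\gamma$ carry no explicit time dependence and the increments of $W^\P$ and of $\tilde N$ on $[u,s]$ are independent of $\F_u$, the solution started at $u$ from a fixed point is a measurable functional of those future increments alone; conditioning on $\F_u$ then gives $\bE^{\P}[\varphi(X_s^{t,x})\mid\F_u] = \bE^{\P}[\varphi(X_s^{u,y})]\big|_{y=X_u^{t,x}}$, which is the Markov property. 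Homogeneity follows because, by \textbf{(A3)}, the driving Lévy noise has stationary increments and the coefficients are time-invariant, so the law of $X_s^{t,x}$ depends on $(t,s)$ only through $s-t$; this last point I would make rigorous by a time-shift argument on the driving noise. These are by now standard arguments for Markovian SDEs with jumps, for which I would follow the framework of \cite{delong2013backward} and the classical references therein.
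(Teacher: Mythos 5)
The paper does not prove this theorem from first principles: its entire proof is a citation, ``Case a), resp.\ case b), follows from Theorem 6.2.9, resp.\ Theorem 6.4.6, in Applebaum (2004).'' Your proposal is a correct, self-contained reconstruction of the standard argument that those cited theorems encapsulate, so the mathematical route is the classical one even though the paper itself takes none of these steps explicitly. Your Picard-iteration scheme for part a) is sound, and the one observation that really matters --- that $(1\wedge|z|)^2 = 1\wedge|z|^2$, so that \textbf{(A2)} together with the integrability condition in \textbf{(A3)} gives exactly the finiteness of $\int_{\bR^d}\sup_x|\gamma(x,z)|^2\,\nu(\ud z)$ needed for the $L^2$ isometry --- is correctly identified as the crux; for the passage to the $\sup$-norm, Doob's $L^2$ maximal inequality would in fact suffice in place of BDG, since the compensated-jump integral is a square-integrable martingale. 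For part b), your flow-plus-independence argument is the right one, but note that the substitution $\bE^{\P}[\varphi(X_s^{u,y})]\big|_{y=X_u^{t,x}}$ requires, beyond pathwise uniqueness for the $\F_u$-measurable initial datum $X_u^{t,x}$, a joint-measurability (in practice, $L^2$-continuity in the initial condition) statement for $(y,\omega)\mapsto X_s^{u,y}(\omega)$; this is precisely the auxiliary lemma that the citation to Applebaum's Theorem 6.4.6 silently supplies, and it is the one place where your sketch would need an additional estimate (again Gronwall applied to $X^{u,y}-X^{u,y'}$) to be complete. The trade-off is the usual one: the paper's citation is economical and its hypotheses \textbf{(A1)}--\textbf{(A3)} are visibly tailored to match Applebaum's, while your version makes transparent exactly where each assumption enters, at the cost of one extra continuity lemma.
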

\begin{proof}
Case a), resp. case b), follows from Theorem 6.2.9., resp. from Theorem 6.4.6, in \citet{applebaum2004levy}.
\end{proof}

\subsection{Related BSDE}\label{sec:relBSDE}
Let us consider the following backward SDE (BSDE) characterized  by a terminal condition and a generator depending on the state process solving the FSDE: \eqref{eq:forSDE}
\begin{multline}\label{eq:BSDE}
    Y_s^{t,x} = g(X_T^{t,x}) + \int_s^T f\left(r,X_{r-}^{t,x},Y_{r-}^{t,x},Z_{r}^{t,x},\int_{\bR^d} U_{r}^{t,x}( z)\delta( z)\nu^{\P}(\ud z)\right)\ud r \\
    -\int_s^T\alpha(X_r^{t,x})Y_r^{t,x}\ud r +\int_s^T\beta(X_r^{t,x})Z_r^{t,x}\ud r  + \int_s^T\int_{\bR^d}U_r^{t,x}( z)\delta( z)\nu^{\P}(\ud z)\ud r \\ - 
    \int_s^T Z_{r}^{t,x}\ud W_r^{\P} - \int_s^T\int_{\bR^d}U_r^{t,x}( z)\tilde{N}(\ud r, \ud z).
\end{multline}
Again considering a suitable constant $K>0$, we assume that the function:
\begin{itemize}
    \item[\textbf{(A4)}] $f:[0,T]\times \bR^d\times \bR\times \bR^d\times \bR^d \rightarrow \bR$ is Lipschitz continuous, i.e.
    \begin{multline}
        |f(t,x,y,z,u)-f'(t,x',y',z',u')|\\
        \leq K\left(|x-x'|+|y-y'|+|z-z'|+|u-u'| \right)
    \end{multline}
    for all $(t,x,y,z,u),(t,x',y',z',u')\in [0,T]\times \bR^d\times \bR\times \bR^d\times \bR^d$,  uniformly in $t$;
    \item[\textbf{(A5)}] $g:\bR^d\rightarrow\bR$ is measurable and satisfies
    \begin{equation}
        |g(x)-g(x')|\leq K\left(|x-x'| \right) \; , \; x,x'\in\bR^d;
    \end{equation}    
    \item[\textbf{(A6)}]  $\alpha:\bR^d\rightarrow\bR$ satisfies $|\alpha(X_r^{t,x})|\leq K,\; t\leq r\leq T$;
    \item[\textbf{(A7)}] $\beta:\bR^d\rightarrow\bR$ satisfies $|\beta(X_r^{t,x})|\leq K,\; t\leq r\leq T$;    
    \item[\textbf{(A8)}] $\delta:\bR^d\rightarrow\bR$ satisfies $\int_{\bR^d}|\delta( z)|^2Q^{\P}(t,\ud z)\leq K,\; t\leq r\leq T$  and  $\delta(z)>-1$,  $z \in \bR^d$.
\end{itemize}

\begin{theorem}\label{th:uniqSol}
{[}See Theorem 4.1.3 in \citet{delong2013backward}{]} Under assumptions (A1)–(A5), there exists a unique solution $((X_s^{t,x}),(Y_s^{t,x}),(Z_s^{t,x}),(U_s^{t,x}))\in \bS_{T}^2(\bR^d)\times \bS_{T}^2(\bR)\times \bH_{T}^2(\bR^d)\times \bH_{T,N}^2(\bR^d)$ to the forward-backward SDE (FBSDE) \eqref{eq:forSDE}, \eqref{eq:BSDE}.   
\end{theorem}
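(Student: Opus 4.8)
The plan is to exploit the fact that \eqref{eq:forSDE}--\eqref{eq:BSDE} is a \emph{decoupled} forward--backward system: the forward dynamics do not depend on $(Y,Z,U)$, so I would first solve the forward equation and then treat the backward equation as a BSDE with given (random) coefficients. By Theorem \ref{th:solExSDE} the forward SDE admits, for each $(t,x)$, a unique càdlàg adapted solution $X^{t,x}$, and the Lipschitz/growth conditions (A1)--(A3) yield, via the standard moment estimate, $\bE^{\P}\left[\sup_{t\le s\le T}|X_s^{t,x}|^2\right]<\infty$, i.e. $X^{t,x}\in\bS_T^2(\bR^d)$. It then remains to produce a unique triple $(Y,Z,U)$ solving \eqref{eq:BSDE} with $X^{t,x}$ inserted, which is exactly an application of the abstract existence and uniqueness theorem for BSDEs with jumps in \citet{delong2013backward}.

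Second, I would absorb the three linear correction terms into a single generator. Writing $\langle u,\delta\rangle:=\int_{\bR^d}u(z)\delta(z)\nu^{\P}(\ud z)$ and setting
\[
F(\omega,r,y,z,u):=f\big(r,X_{r-}^{t,x},y,z,\langle u,\delta\rangle\big)-\alpha(X_r^{t,x})\,y+\beta(X_r^{t,x})\,z+\langle u,\delta\rangle,
\]
equation \eqref{eq:BSDE} takes the canonical form
\[
Y_s=g(X_T^{t,x})+\int_s^T F(r,Y_{r-},Z_r,U_r)\,\ud r-\int_s^T Z_r\,\ud W_r^{\P}-\int_s^T\int_{\bR^d}U_r(z)\tilde N(\ud r,\ud z),
\]
so that the problem fits the template to which the cited theorem applies.

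Third, I would verify the hypotheses of that theorem. Square-integrability of the terminal datum, $\bE^{\P}\left[|g(X_T^{t,x})|^2\right]<\infty$, follows from the Lipschitz bound (A5) (hence linear growth of $g$) together with the moment estimate on $X_T^{t,x}$; the analogous bound $\bE^{\P}\left[\int_0^T|F(r,0,0,0)|^2\,\ud r\right]<\infty$ follows likewise from the linear growth of $f$ implied by (A4). For the uniform Lipschitz property in $(y,z,u)$, the contributions of $f$, $\alpha$ and $\beta$ are immediate from (A4) and the bounds (A6)--(A7). The delicate point --- and the one I expect to be the main obstacle --- is the dependence on the jump-integrand $U$, which enters only through the functional $u\mapsto\langle u,\delta\rangle$, both inside the last slot of $f$ and as an explicit term. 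This functional must be shown to be Lipschitz with respect to the natural $L^2(\nu^{\P})$-norm underlying the space $\bH_{T,N}^2(\bR^d)$; by Cauchy--Schwarz,
\[
|\langle u,\delta\rangle-\langle u',\delta\rangle|\le\Big(\int_{\bR^d}|\delta(z)|^2\nu^{\P}(\ud z)\Big)^{1/2}\Big(\int_{\bR^d}|u(z)-u'(z)|^2\nu^{\P}(\ud z)\Big)^{1/2},
\]
and the first factor is finite precisely because of the square-integrability of $\delta$ imposed in (A8). Without this $L^2(\nu^{\P})$ control the generator would fail to be Lipschitz in the correct topology and the fixed-point argument underlying the abstract theorem would break down.

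Finally, with all hypotheses checked, the cited theorem delivers a unique $(Y,Z,U)\in\bS_T^2(\bR)\times\bH_T^2(\bR^d)\times\bH_{T,N}^2(\bR^d)$; combined with $X^{t,x}\in\bS_T^2(\bR^d)$ from Theorem \ref{th:solExSDE}, this yields the asserted quadruple together with its uniqueness.
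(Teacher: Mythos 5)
Your proposal is correct and follows essentially the same route as the paper, which gives no argument beyond invoking Theorem 4.1.3 of \citet{delong2013backward}; you simply make explicit the standard verification (decoupling the forward equation, moment bounds on $X^{t,x}$, and Lipschitz continuity of the full generator, including the Cauchy--Schwarz estimate showing $u\mapsto\int_{\bR^d}u(z)\delta(z)\nu^{\P}(\ud z)$ is Lipschitz in the $L^2(\nu^{\P})$ topology) that the paper leaves to the reader. One remark worth keeping: your verification correctly relies on (A6)--(A8) in addition to the (A1)--(A5) listed in the statement, since boundedness of $\alpha$, $\beta$ and square-integrability of $\delta$ are indispensable for the generator of \eqref{eq:BSDE} to satisfy the hypotheses of the cited theorem.
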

In what follows we provide the main result of the paper, namely a representation for the value process of the FBSDE (\ref{eq:forSDE} - \ref{eq:BSDE}) under different probability measures.

\begin{theorem}\label{th:mainTh}
Under the preceeding assumptions, let $ T \in \mathbb{R}^+$, $T<\infty$ and consider $0 <t\leq T$. Let us define
\begin{align}
\label{eq:densityMainTh}
\begin{aligned}
    \mathcal E_t^T &\coloneqq \exp\left\lbrace-\frac{1}{2} \int_t^T \beta(X_r^{t,x})^2\ud r + \int_t^T \beta(X_r^{t,x})\ud W^{\P} \right. \\
    &\quad\left. +\int_t^T\int_{\bR^d}\ln(1+\delta( z))-\delta( z)\nu^{\P}(\ud  z) \ud r + \int_t^T\int_{\bR^d}\ln(1+\delta( z))\tilde{N}(\ud r,\ud z) \right\rbrace\\
    &=\frac{H_T}{H_t}  \, ,
    \end{aligned}
\end{align}

\noindent i.e. $H= \lbrace H_r^\P,r\in[t,T]\rbrace$ solves the FSDE
\begin{equation}
    \ud H_r = H_r\left(\beta(X_r^{t,x})\ud W_r^{\P} + \int_{\bR^d}\delta( z)\tilde{N}(\ud r,\ud  z) \right) \; , \; 0<t\leq r\leq T \;,
\end{equation}

\noindent then the process $Y^{t,x}$ in \eqref{eq:BSDE} admits the representation
\begin{multline}
    Y_t^{t,x} = \bE^{\P} \left[ g(X_T^{t,x}) e^{-\int_t^T \alpha(X_r^{t,x})\ud r} \mathcal E_t^T + \int_t^T e^{-\int_t^r \alpha(X_s^{t,x})\ud s}\mathcal E_t^r \right. \\
    \left. \cdot f\left(r,X_{r-}^{t,x},Y_{r-}^{t,x},Z_{r}^{t,x},\int_{\bR^d} U_{r}^{t,x}( z)\delta( z)\nu^{\P}(\ud z)\right)      \ud r    \, \big|\, \F_t\right] \; \P \,\mbox{a.s.}\, 
\end{multline}

\noindent and, $\mathcal E^T$ defined in \eqref{eq:densityMainTh} is a true $(\P,\F)$-martingale. \\

\noindent Define
\begin{itemize}
    \item $\frac{\partial \P^2}{\partial \P}\bigg{|}_{\F_r}\coloneqq \mathcal E_t^r \; ;$
    \item $W_r^{\P^2} \coloneqq W_r^{\P} - \int_t^r \beta(X_s^{t,x}))\ud s  \; ;$
    \item $\nu^{\P^2}(\ud z) \coloneqq (1+\delta( z))\nu^{\P}(\ud z)  \; ,$
\end{itemize}

\noindent then the above defined FBSDE, resp. the process $Y_s^{t,x}$, admits the equivalent representation 
\begin{multline}
    Y_s^{t,x} = g(X_T^{t,x}) + \int_s^T f\left(r,X_{r-}^{t,x},Y_{r-}^{t,x},Z_{r}^{t,x},\int_{\bR^d} U_{r}^{t,x}( z) \frac{\delta( z)}{1+\delta( z)}\nu^{\P^2}(\ud z)\right) \ud r \\
    -\int_s^T\alpha(X_r^{t,x})Y_r^{t,x}\ud r + \int_s^T\int_{\bR^d}U_r^{t,x}( z)\frac{\delta( z)}{1+\delta( z)}\nu^{\P^2}(\ud z)\ud r \\ - 
    \int_s^T Z_{r}^{t,x}\ud W_r^{\P^2} - \int_s^T\int_{\bR^d}U_r^{t,x}( z)\tilde{N}(\ud r, \ud z)\; ,
\end{multline}
resp.
\begin{multline}\label{eq:mainThEqChange}
    Y_t^{t,x} = \bE^{\P^2} \left[ g(X_T^{t,x}) e^{-\int_t^T \alpha(X_r^{t,x})\ud r} + \int_t^T e^{-\int_t^r \alpha(X_s^{t,x})\ud s} \right. \\
    \left. \cdot f\left(r,X_{r-}^{t,x},Y_{r-}^{t,x},Z_{r}^{t,x},\int_{\bR^d} U_{r}^{t,x}( z) \frac{\delta( z)}{1+\delta( z)}\nu^{\P^2}(\ud z)\right) \ud r    \, \big|\, \F_t\right] \; \P^2 \,\mbox{a.s.}\, .
\end{multline}
under the probability measure $\P^2$,.
\end{theorem}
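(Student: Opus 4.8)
The plan is to recognise that, once the unique solution $(X^{t,x},Y^{t,x},Z^{t,x},U^{t,x})$ of Theorem \ref{th:uniqSol} is fixed, the backward equation \eqref{eq:BSDE} becomes a \emph{linear} BSDE in $(Y,Z,U)$: its driver splits into the genuinely nonlinear source term $\psi_r \coloneqq f(r,X_{r-}^{t,x},Y_{r-}^{t,x},Z_r^{t,x},\int_{\bR^d}U_r^{t,x}(z)\delta(z)\nu^{\P}(\ud z))$, which from now on I treat as a given $\bF$-adapted process, plus the three linear contributions $-\alpha(X_r)Y_r$, $\beta(X_r)Z_r$ and $\int_{\bR^d}U_r(z)\delta(z)\nu^{\P}(\ud z)$. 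The whole statement is then an adjoint-process (Feynman--Kac) representation for such a linear BSDE with jumps, carried out first under $\P$ and then transported to $\P^2$.

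First I would introduce the discount/adjoint process $\Gamma_s \coloneqq e^{-\int_t^s\alpha(X_u^{t,x})\ud u}\,\mathcal E_t^s$ for $s\in[t,T]$, with $\Gamma_t=1$. Applying It\^o's formula to the Dol\'eans--Dade exponential shows that $\Gamma$ solves $\ud\Gamma_s = \Gamma_{s-}(-\alpha(X_s)\ud s + \beta(X_s)\ud W_s^{\P} + \int_{\bR^d}\delta(z)\tilde N(\ud s,\ud z))$, i.e. the adjoint of the linear part of \eqref{eq:BSDE}. I would then apply the integration-by-parts formula to $\Gamma_s Y_s$. The crucial point is that all drift contributions telescope: the $\alpha Y$, $\beta Z$ and $\int U\delta\nu^{\P}$ terms arising from $\Gamma_{s-}\ud Y_s$ are cancelled respectively by the $-\alpha$ term in $Y_{s-}\ud\Gamma_s$, by the continuous covariation $\ud[\Gamma,Y]^c_s = \Gamma_{s-}\beta(X_s)Z_s\ud s$, and by the compensator of the jump covariation $\sum\Delta\Gamma\,\Delta Y$, leaving $\ud(\Gamma_s Y_s) = -\Gamma_{s-}\psi_s\ud s + \ud M_s$ with $M$ an $\F$-local martingale. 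Integrating on $[t,T]$, using $\Gamma_t Y_t = Y_t$ and $\Gamma_T Y_T = \Gamma_T g(X_T^{t,x})$, and taking $\bE^{\P}[\,\cdot\,|\F_t]$ once $M$ is known to be a true martingale yields the first representation, since $\Gamma_s = e^{-\int_t^s\alpha}\mathcal E_t^s$.

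Two integrability issues must be settled. For the martingale property of $\mathcal E^T$ I would note that $\mathcal E_t^{\cdot}$ is a strictly positive local martingale (positivity is guaranteed by $\delta(z)>-1$ in \textbf{(A8)}), hence a supermartingale, and then upgrade it to a true martingale by showing $\bE^{\P}[\mathcal E_t^T]=1$: this follows from the boundedness of $\beta$ in \textbf{(A7)}, controlling the continuous part via a Novikov-type estimate, together with the uniform square-integrability of $\delta$ against $Q^{\P}$ in \textbf{(A8)} and the fact that $\delta$ is deterministic, which controls the jump part through the exponential formula for Poisson integrals. The same bounds, combined with the $\bS_T^2$/$\bH_T^2$/$\bH_{T,N}^2$ integrability of $(Y,Z,U)$ from Theorem \ref{th:uniqSol} and the Lipschitz bound \textbf{(A4)} on $f$, give the $L^1$ control ensuring that $M$ above is a genuine martingale, so the conditional expectation is legitimate.

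Once $\mathcal E^T$ is a true martingale, $\P^2$ is a well-defined equivalent measure and I would invoke Girsanov's theorem for jump-diffusions: $W^{\P^2}_r = W_r^{\P} - \int_t^r\beta(X_s^{t,x})\ud s$ is a $\P^2$-Brownian motion and the $\P$-compensator $\nu^{\P}(\ud z)$ of $N$ becomes $\nu^{\P^2}(\ud z) = (1+\delta(z))\nu^{\P}(\ud z)$. Substituting $\ud W^{\P} = \ud W^{\P^2} + \beta\,\ud s$ and $\tilde N(\ud s,\ud z) = \tilde N^{\P^2}(\ud s,\ud z) + \delta(z)\nu^{\P}(\ud z)\ud s$ into \eqref{eq:BSDE}, and using $\delta(z)\nu^{\P}(\ud z) = \frac{\delta(z)}{1+\delta(z)}\nu^{\P^2}(\ud z)$, the $\beta Z$ drift is absorbed by the Brownian shift and the linear jump drift by the compensator shift, producing the stated $\P^2$-dynamics. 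Regrouping the residual jump drift $\int_{\bR^d}U_r(z)\frac{\delta(z)}{1+\delta(z)}\nu^{\P^2}(\ud z)\ud r$ with $\int_{\bR^d}U_r(z)\tilde N(\ud r,\ud z)$ into the $\P^2$-compensated integral $\int_{\bR^d}U_r(z)\tilde N^{\P^2}(\ud r,\ud z)$ leaves $-\alpha Y$ as the only linear term, so rerunning the adjoint-process argument under $\P^2$ with discount factor $e^{-\int_t^s\alpha}$ alone delivers \eqref{eq:mainThEqChange}. The main obstacle throughout is the passage from local to true martingales — in particular establishing $\bE^{\P}[\mathcal E_t^T]=1$ — since this is what makes both conditional-expectation representations and the very definition of $\P^2$ rigorous; the remaining steps are bookkeeping with It\^o's formula and Girsanov.
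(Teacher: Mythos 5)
Your proposal is correct and follows essentially the same route as the paper: both arguments hinge on the adjoint/discount process $H_s/B_s$ (your $\Gamma_s=e^{-\int_t^s\alpha}\mathcal E_t^s$ is the same object up to the constant $H_t B_t^{-1}$), the same cancellation of the linear $\alpha Y$, $\beta Z$ and $\int U\delta\,\nu^{\P}$ terms via integration by parts, the same appeal to the true-martingale property of $\mathcal E^T$ under \textbf{(A7)}--\textbf{(A8)} (the paper cites Proposition 2.5.1 of Delong where you sketch the underlying Novikov-type estimate), and the same Girsanov step for the $\P^2$ representation. The only difference is one of direction --- the paper defines the candidate martingale $M_s$ and verifies via the martingale representation theorem that the induced dynamics recover \eqref{eq:BSDE}, while you derive the representation forward from the solution --- which is a presentational, not a mathematical, distinction.
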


\begin{proof}
Thanks to Assumptions \textbf{(A7)} and \textbf{(A8)}, the process $\mathcal E^T$ is a true $(\P,\F)$-martingale by Proposition 2.5.1 in \citet{delong2013backward}.
Let us define $B_r\coloneqq \exp{\int_0^r\alpha(X_s^{t,x})\ud s}$. We consider the square integrable martingale 
\begin{align*}
M_s=\bE^{\P}\left[ H_T\frac{g(X_T^{t,x})}{B_T}+\int_t^T \frac{H_r}{B_r}f\left(r,X_{r-}^{t,x},Y_{r-}^{t,x},Z_{r}^{t,x},\int_{\bR^d} U_{r}^{t,x}( z)\delta( z)\nu^{\P}(\ud z)\right)\ud r \, \big|\, \F_s \right],
\end{align*}
for $t\leq s\leq T$. The martingale representation theorem implies the existence of stochastic processes $\tilde{Z}\in \bH_{T}^2(\bR^d)$, $\tilde{U}\in \bH_{T,N}^2(\bR^d)$, such that
\begin{align}
M_s=M_t    + \int_t^s\tilde{Z}_r^{t,x}\ud W_r^{\P} + \int_t^s\int_{\bR^d} \tilde{U}_r^{t,x}( z)\tilde{N}(\ud r, \ud z) \; .
\end{align}
Setting $\tilde{Z}_r^{t,x}\coloneqq \frac{H_r}{B_r}\left(Z_r^{t,x}+Y_r^{t,x}\beta (X_r^{t,x}) \right)$  and $\tilde{U}_r^{t,x}( z)\coloneqq \frac{H_r}{B_r}\left(U_r^{t,x}( z)(1+\delta( z))+Y_r^{t,x}\delta( z) \right)$, the It\^{o}-D{\"o}blin lemma implies
\begin{align}
    \ud \left(\frac{Y_s^{t,x}}{B_s} \right) = & \frac{1}{H_s}\left(\ud \left(H\frac{Y_s^{t,x}}{B_s}\right)-\frac{Y_s^{t,x}}{B_s}\ud H_s-\ud \left[ H,\frac{Y_s^{t,x}}{B_s}\right] \right)\\
        = & \frac{1}{H_s}\left(-\frac{H_s}{B_s}f\left(s,X_{s-}^{t,x},Y_{s-}^{t,x},Z_s^{t,x},\int_{\bR^d} U_{r}^{t,x}( z)\delta( z)\nu^{\P}(\ud z)\right)\ud s \right.\\
        & +\tilde{Z_s}\ud W_s^{\P} - \frac{Y_s^{t,x}}{B_s}H_s\beta(X_s^{t,x})\ud W_s^{\P} - \frac{Z_s^{t,x}}{B_s}H_s\beta(X_s^{t,x})\ud s \\
        & + \int_{\bR^d}\frac{H_s}{B_s}\left( U_s^{t,x}( z)(1+\delta( z))+Y_s^{t,x}\delta( z)\right)\tilde{N}(\ud s, \ud z)\\
        & - \int_{\bR^d}\frac{Y_s^{t,x}}{B_s}H_s\delta( z)\tilde{N}(\ud s, \ud z)\\
         & \left. - \int_{\bR^d}\frac{1}{B_{s-}}H_{s-}\delta( z)U_{s-}^{t,x}N(\ud s, \ud z) \pm \int_{\bR^d}\frac{1}{B_{s-}}H_{s-}\delta( z)U_{s-}^{t,x}\nu^{\P}(\ud z)\ud s\right)\\
   =&  -\frac{1}{B_s} f\left(s,X_{s-}^{t,x},Y_{s-}^{t,x},Z_s^{t,x},\int_{\bR^d} U_{s}^{t,x}( z)\delta( z)\nu^{\P}(\ud z)\right)\ud s + \frac{Z_s^{t,x}}{B_s}\ud W_s^{\P} \\
    &- \frac{Z_s^{t,x}}{B_s}\beta(X_s^{t,x})\ud s    - \frac{1}{B_s}\int_{\bR^d}\delta( z)U_{s}^{t,x}( z)\nu^{\P}(\ud z)\ud s + \int_{\bR^d}\frac{1}{B_s}U_s^{t,x}( z)\tilde{N}(\ud s, \ud z),
\end{align}
and since we also have
\begin{equation}
    \ud \left(\frac{Y_s^{t,x}}{B_s}\right) = \frac{1}{B_s}\ud Y_s^{t,x}-\frac{Y_s^{t,x}}{B_s^2}\ud B_s,
\end{equation}
then
\begin{align}
    \ud Y_s^{t,x} & = B_s\ud \left(\frac{Y_s^{t,x}}{B_s}\right)+\frac{Y_s^{t,x}}{B_s}\ud B_s\\
    = &  -f\left(s,X_{s-}^{t,x},Y_{s-}^{t,x},Z_s^{t,x},\int_{\bR^d} U_{s}^{t,x}( z)\delta( z)\nu^{\P}(\ud z)\right)\ud s\\
    & + Z_s^{t,x}\ud W_s^{\P} -Z_s^{t,x}\beta(X_s^{t,x})\ud s\\
    & -\int_{\bR^d}\delta ( z) U_s^{t,x}( z)\nu^{\P}(\ud  z)\ud s +\int_{\bR^d}U_s^{t,x}( z)\tilde{N}(\ud s, \ud z) + \alpha (X_s^{t,x})Y_s^{t,x}\ud s\; \,.
\end{align}
Summing up the previous points we recover
\eqref{eq:BSDE}, namely:
\begin{multline}
    -\ud Y_s^{t,x} = \left(f\left(s,X_{s-}^{t,x},Y_{s-}^{t,x},Z_s^{t,x},\int_{\bR^d} U_{s}^{t,x}( z)\delta( z)\nu^{\P}(\ud z)\right) + Z_s^{t,x}\beta(X_s^{t,x})   \right.\\
    \left.+ \int_{\bR^d}\delta( z)U_s^{t,x}( z)\nu^{\P}(\ud z) - \alpha(X_s^{t,x})Y_s^{t,x}\right)\ud s - Z_s^{t,x}\ud W_s^{\P} - \int_{\bR^d}U_s^{t,x}( z)\tilde{N}(\ud s, \ud z) \; .
\end{multline}
Finally, since $\mathcal E^T$ is a true $(\P,\F)$-martingale,  we can apply the Girsanov Theorem as in Theorem 2.5.1 in \citet{delong2013backward}. By repeating the previous steps under the equivalent measure $\P^2$, we have that the {\it new} representation of the process $Y_s^{t,x}$ under $\P^2$ reads as follows:
\begin{multline}
    Y_t^{t,x} = \bE^{\P^2} \left[ g(X_T^{t,x}) e^{-\int_t^T \alpha(X_r^{t,x})\ud r} + \int_t^T e^{-\int_t^r \alpha(X_s^{t,x})\ud s} \right. \\
    \left. \cdot f\left(r,X_{r-}^{t,x},Y_{r-}^{t,x},Z_{r}^{t,x},\int_{\bR^d} U_{r}^{t,x}( z) \frac{\delta( z)}{1+\delta( z)}\nu^{\P^2}(\ud z)\right) \ud r    \, \big|\, \F_t\right] \; \P^2 \,\mbox{a.s.}\, .
\end{multline}
\end{proof}

We notice that changing the measure in \eqref{eq:mainThEqChange} results in the appearance of the density process $\mathcal{E}^T$ both next to the terminal condition (which is standard) and inside the integral term which is an uncommon feature of the non-linear and recursive context we are considering.

\section{Financial Applications}\label{sec:FinancialApplications}
In what follows,  we adopt the setup of \citet{bielecki2015valuation}, see also \cite{gnoattoSeiffert2021}. In particular, we fix a positive but finite time horizon $T$. All stochastic processes introduced in the sequel are   defined on a filtered probability space $(\Omega,\F,\bF,\P)$ where the filtration $\bF\coloneqq\lbrace \F_t, t\in \left[0,T\right]\rbrace$ satisfies usual assumptions, while $\F_0$ is the trivial $\sigma-$algebra at initial time. All processes introduced in the sequel are assumed to be $\bF$-adapted and we work with càdlàg versions.

We denote by $S^i$ the price (ex-dividend price) of the i-th risky asset with cumulative dividend stream $A^i$ with $i=1,\ldots,d$. We take the perspective of the hedger, i.e., the entity that sells a certain contract and constructs a hedging portfolio against the liability stemming from the contract. Let $\xi_t^i$ denote the number of the hedger's positions in asset $S^i$ at time $t$. We let $B^0=B$ be the cash account for unsecured funding and introduce $B^1,\ldots,B^d$ cash accounts allowing for the financing of repurchase agreements (repo trading) on the risky assets.
We assume that:
\begin{itemize}
    \item[i)] $S^i$ for $i=1,\ldots,d$ are semimartingales;
    \item[ii)] $A^i$ for $i=1,\ldots,d$ are finite variation processes with $A_0^i=0$;
    \item[iii)] $B^j$ for  $j=1,\ldots,d$ are strictly positive and continuous processes of finite variation with $B_0^j=1$.
\end{itemize}

We introduce the cumulative dividend price $S^{i,cld}$, given by
\begin{equation}
    S_t^{i,cld}\coloneqq S_t^i+B_t^i\int_{(0,t]}(B_u^i)^{-1}\ud A_u^i \; , \quad t\in[0,T] \; ,
\end{equation}
together with its discounted version
\begin{equation}
    \hat{S}_t^{i,cld}\coloneqq \hat{S}_t^i+\int_{(0,t]}(B_u^i)^{-1}\ud A_u^i \; ,
\end{equation}
with $\hat{S}_t^{i,cld}=(B^i)^{-1}S^i$.

A trading strategy, or dynamic portfolio, is composed of positions in risky assets $\xi^i$, $i=1,\ldots,d$, positions in unsecured funding repo accounts $\psi^j$, $i=0,\ldots,d$, and the cash account related to lending and borrowing collateral $\psi^{c,l}$, $\psi^{c,b}$ 
\begin{equation}
    \varphi=(\xi^1,\ldots,\xi^d,\psi^0,\ldots,\psi^d,\psi^{c,l},\psi^{c,b}).
\end{equation}
Processes $(\xi^1,\ldots,\xi^d)$ are assumed to be $\bF$-predictable, while $(\psi^0,\ldots,\psi^d,\psi^{c,l},\psi^{c,b})$ are $\bF$-adapted, allowing the stochastic integrals we then introduce to be well defined. Trading strategies are subject to admissibility constraints. A contract is represented by means of its cumulative stream of cash flows, hence by a process $A$ of finite variation with càdlàg paths, such that $A_{0^-}=0$. As to provide an example, if the hedger sells a standard call option written on $S^i$, with maturity time $T$ and strike price $\mathcal{K}$, at a price $p$, then $A_t=p\mathbbm{1}_{[0,T]}(t)-(S_T^i-\mathcal{K})^+\mathbbm{1}_{[T]}(t)$.

To prevent arbitrage opportunities via trivial positions in the cash accounts, we impose the repo constraint
\begin{equation}\label{eq:repoConst}
    \psi_t^iB_t^i+\xi_t^iS_t^i=0  \; , \; \ud\P \otimes \ud t - a.s.
\end{equation}
meaning that the holdings in the i-th cash account is instrumental to the purpose of financing the position in the i-th risky asset only.

In the setting of \citet{bielecki2015valuation}, the meaning of a martingale measure is that of a measure such that all processes $\hat{S}^{i,cld}$ are local martingales.

\begin{proposition}\label{prop:prop3_1}
{[}See Proposition 3.1 in \citet{bielecki2015valuation}{]} Assume that all strategies available to the hedger are admissible and satisfy the repo constraint \eqref{eq:repoConst}. If $\exists$ a probability measure $\bQ$ on $(\Omega,\F_T)$ such that $\bQ\sim\P$ and the processes $\hat{S}^{i,cld}$, $i=1,\ldots,d$ are local $(\bQ,\bF)$ martingales, then the market model is free of arbitrage opportunities for the hedger.
\end{proposition}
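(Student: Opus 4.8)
The plan is to establish the easy half of the fundamental theorem of asset pricing in this dividend-paying, funded market, namely that the existence of the equivalent martingale measure $\bQ$ precludes arbitrage for the hedger. First I would fix the operative notion of arbitrage: a self-financing admissible strategy $\varphi$ associated with the null contract whose wealth process $V(\varphi)$ satisfies $V_0(\varphi)=0$, $V_T(\varphi)\geq 0$ $\P$-almost surely, and $\P\big(V_T(\varphi)>0\big)>0$. The goal is to derive a contradiction from the existence of such a $\varphi$.

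The pivotal step is to compute the dynamics of the discounted wealth $\hat V\coloneqq V/B$, discounting by the unsecured funding account $B=B^0$. Combining the self-financing condition with the repo constraint \eqref{eq:repoConst}, the repo holdings are pinned down by $\psi_t^i B_t^i=-\xi_t^i S_t^i$, so that the financing legs attached to the risky positions cancel against their repo counterparts. What remains is a stochastic integral of the predictable positions $\xi^i$ against the discounted cumulative-dividend prices, i.e.
\[
\ud \hat V_t = \sum_{i=1}^d \xi_t^i\, \ud \hat S_t^{i,cld}.
\]
Carrying out this reduction — propagating the dividends through the $A^i$ terms, absorbing the collateral accounts $\psi^{c,l},\psi^{c,b}$, and reconciling the distinct discount factors $B^i$ — is the step I expect to be most delicate, and it is precisely where the self-financing and admissibility definitions of \citet{bielecki2015valuation} must be used to the letter.

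Once this representation is available, the remainder is soft. Since each $\hat S^{i,cld}$ is a local $(\bQ,\bF)$-martingale by hypothesis and each $\xi^i$ is $\bF$-predictable, the process $\hat V$ is a local $\bQ$-martingale; the admissibility requirement that $\hat V$ be bounded from below then upgrades it, via Fatou's lemma applied along a localizing sequence, to a $\bQ$-supermartingale. In particular $\bE^{\bQ}[\hat V_T]\leq \hat V_0 = 0$.

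To close the argument I would exploit the equivalence $\bQ\sim\P$: the arbitrage conditions $V_T(\varphi)\geq 0$ and $\P(V_T(\varphi)>0)>0$ transfer to $\hat V_T\geq 0$ $\bQ$-almost surely and $\bQ(\hat V_T>0)>0$, whence $\bE^{\bQ}[\hat V_T]>0$. This contradicts the supermartingale inequality $\bE^{\bQ}[\hat V_T]\leq 0$, so no admissible $\varphi$ of the stated form can exist and the market is arbitrage-free for the hedger.
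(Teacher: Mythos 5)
The paper does not actually prove this proposition: it is imported verbatim, with a bracketed citation, from Proposition 3.1 of \citet{bielecki2015valuation}, so there is no in-text argument to compare yours against. That said, your proposal reconstructs essentially the proof given in the cited source, and it is correct in outline: the repo constraint \eqref{eq:repoConst} forces $\psi^i_t B^i_t=-\xi^i_t S^i_t$, the financing legs cancel, and the discounted wealth becomes a stochastic integral against the $\hat S^{i,cld}$; the local-martingale hypothesis plus admissibility (lower bound) yields the supermartingale property via Fatou along a localizing sequence, and equivalence of $\bQ$ and $\P$ gives the contradiction. One small correction to the pivotal identity: with asset-specific repo accounts $B^i\neq B$, the reduction gives $\ud \hat V_t=\sum_{i=1}^d \xi^i_t\,(B^i_t/B_t)\,\ud\hat S^{i,cld}_t$ rather than $\sum_i\xi^i_t\,\ud\hat S^{i,cld}_t$; since the $B^i$ are continuous, adapted and of finite variation, the extra factor is predictable and locally bounded, so the local-martingale conclusion survives unchanged. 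You should also note that for the null contract no collateral is exchanged, so the accounts $\psi^{c,l},\psi^{c,b}$ drop out and the term $\hat F^h$ in \eqref{eq:defF} vanishes, which is what legitimates ignoring them in the reduction.
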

Let us now specialize the above introduced setting by means of the following assumptions:
\begin{enumerate}
    \item \label{as:rates}All processes $B^j$ are absolutely continuous, meaning that they can be written as $\ud B_t^j=r_t^jB_t^j\ud t$ for some $\bF$-adapted processes $r^j$, $j=0,\ldots,d$. To simplify the treatment of existence and uniqueness of BSDEs, we assume that such rates are bounded.
    \item We let $W^{1,\P},\ldots,W^{d,\P}$ be standard mutually independent Brownian motions and we define $W^{\P}=\begin{psmallmatrix} W^{1,\P}\\ \vdots\\ W^{d,\P} \end{psmallmatrix}$.
    \item \label{as:vecFields}We introduce vector fields $\mu^1,\ldots,\mu^d, \sigma^1,\ldots,\sigma^d$ which are assumed to be bounded stochastic process, and we write \begin{equation}
        \mu=\begin{psmallmatrix} \mu^{1}\\ \vdots\\ \mu^{d} \end{psmallmatrix} \; , \;  \Sigma=\begin{psmallmatrix} \sigma^{1}&&\\ &\ddots&\\ &&\sigma^{d} \end{psmallmatrix}  \; .
    \end{equation}
\end{enumerate}
Each asset is then characterized by the following
dividend process
\begin{equation}
    A_t^i=\int_0^t k_u^i S_u^i \ud u \; ,
\end{equation}
where $k^i$, $i=1,\ldots,d$ are $\bF$-adapted bounded stochastic processes,

while the risky assets evolve according to
\begin{equation}\label{eq:Sdyn}
    \ud S = \ud \begin{psmallmatrix} S_t^1\\ \vdots\\ S_t^d\end{psmallmatrix} = diag(S)\left( \mu_t\ud t + \Sigma_t \bar{\rho}_t\ud W_t^\P\right)  \; ,  
\end{equation}
where $\bar{\rho}_t$ is a matrix such that $\rho_t=\bar{\rho}_t\bar{\rho}_t^\intercal$ represents the correlation matrix between  
$W^1,\ldots,W^d$. Moreover, $\bar{\rho}$ is such that $\Sigma_t\bar{\rho}_t$ is invertible, for every $t\in [0,T]$.
Given the previous setting, let us recall the shape of the martingale measure $\bQ$ as stated in Lemma 5.2 in \citet{bielecki2015valuation}.
\begin{lemma}
Under the measure $\bQ$, risky assets evolve according to 
\begin{equation}
    \ud \begin{psmallmatrix} S_t^1\\ \vdots\\ S_t^d \end{psmallmatrix} = diag(S) \left( \begin{psmallmatrix} r_t^1-k_t^1 \\ \vdots \\ r_t^d-k_t^d \end{psmallmatrix}\ud t + \Sigma_t\bar{\rho}_t\ud W_t^\bQ \right) ,
\end{equation}
where the multivariate Brownian motion $W^\bQ$ is given as
\begin{equation}\label{eq:QBrow}
    W^\bQ \coloneqq W_t^\P + \int_0^t\left( \Sigma_u\bar{\rho}_u \right)^{-1}\begin{psmallmatrix} \mu_u^1-r_u^1+k_u^1\\ \vdots \\ \mu_u^d-r_u^d+k_u^d \end{psmallmatrix}\ud u.
\end{equation}
\end{lemma}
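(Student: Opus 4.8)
The plan is to characterise $\bQ$ through the requirement, dictated by Proposition \ref{prop:prop3_1}, that each discounted cumulative-dividend price $\hat{S}^{i,cld}$ be a local $(\bQ,\bF)$-martingale, and then to read off both the associated market price of risk and the $\bQ$-dynamics of $S$. First I would compute the $\P$-dynamics of $\hat{S}^{i,cld}$. Writing $\hat{S}^i_t = (B^i_t)^{-1}S^i_t$ and using that each $B^i$ is continuous of finite variation with $\ud B^i_t = r^i_t B^i_t\ud t$, the product rule gives $\ud \hat{S}^i_t = \hat{S}^i_t\bigl((\mu^i_t - r^i_t)\ud t + (\Sigma_t\bar{\rho}_t)_i\,\ud W^\P_t\bigr)$, where $(\Sigma_t\bar{\rho}_t)_i$ denotes the $i$-th row. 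Since $\int_{(0,t]}(B^i_u)^{-1}\ud A^i_u = \int_0^t k^i_u\hat{S}^i_u\ud u$, adding the discounted dividend stream yields
\begin{equation*}
    \ud \hat{S}^{i,cld}_t = \hat{S}^i_t\left((\mu^i_t - r^i_t + k^i_t)\ud t + (\Sigma_t\bar{\rho}_t)_i\,\ud W^\P_t\right).
\end{equation*}

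Next I would apply Girsanov's theorem, positing $W^\bQ_t = W^\P_t - \int_0^t\lambda_u\ud u$ for a market price of risk process $\lambda$. Substituting $\ud W^\P_t = \ud W^\bQ_t + \lambda_t\ud t$, the finite-variation part of $\hat{S}^{i,cld}$ becomes $\hat{S}^i_t\bigl((\mu^i_t - r^i_t + k^i_t) + (\Sigma_t\bar{\rho}_t)_i\lambda_t\bigr)\ud t$. Demanding that every $\hat{S}^{i,cld}$ be a local martingale under $\bQ$ forces this drift to vanish for all $i$, i.e. $\Sigma_t\bar{\rho}_t\,\lambda_t = -(\mu_t - r_t + k_t)$ in vector form. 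Because $\Sigma_t\bar{\rho}_t$ is invertible by assumption, this is solved uniquely by $\lambda_t = -(\Sigma_t\bar{\rho}_t)^{-1}(\mu_t - r_t + k_t)$, and inserting this into the definition of $W^\bQ$ reproduces exactly \eqref{eq:QBrow}. Substituting $\ud W^\P_t = \ud W^\bQ_t + \lambda_t\ud t$ back into \eqref{eq:Sdyn} and using $\Sigma_t\bar{\rho}_t\lambda_t = -(\mu_t - r_t + k_t)$ collapses the drift of $S$ to $r_t - k_t$, which is the asserted $\bQ$-dynamics.

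The algebra above is routine; the step I expect to require the most care is the verification that this $\lambda$ indeed defines a genuine equivalent probability measure $\bQ\sim\P$, rather than a purely formal drift change. This reduces to checking that the Doléans-Dade exponential $\doleans{\int_0^\sbullet\lambda_u\ud W^\P_u}$ is a true $(\P,\bF)$-martingale. Here the boundedness hypotheses on the rates (item \ref{as:rates}), on the drift and volatility coefficients (item \ref{as:vecFields}) and on the dividend rates $k^i$, combined with the invertibility of $\Sigma_t\bar{\rho}_t$, guarantee that $\lambda$ is bounded, so that Novikov's condition holds and the exponential is a true martingale. This both legitimises the change of measure and ensures, through Proposition \ref{prop:prop3_1}, that the resulting model is arbitrage-free for the hedger.
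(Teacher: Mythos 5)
Your derivation is correct and is essentially the argument the paper relies on: the paper does not prove this lemma itself but cites Lemma 5.2 of \citet{bielecki2015valuation}, and your computation of the drift of $\hat{S}^{i,cld}$, the resulting Girsanov kernel $\lambda_t=-\theta_t$ with $\theta$ as in \eqref{eq:GirsanovKernelDiffusion}, and the true-martingale property of the stochastic exponential \eqref{eq:stochExp} match the paper's surrounding discussion exactly. The only minor point, which the paper glosses over equally, is that pointwise invertibility of $\Sigma_t\bar{\rho}_t$ together with boundedness of $\mu$, $r$, $k$ does not by itself give a uniform bound on $(\Sigma_t\bar{\rho}_t)^{-1}$, hence on $\lambda$; Novikov's condition really requires the $\sigma^i$ to be bounded away from zero and $\bar{\rho}$ to be uniformly non-degenerate.
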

Given the assumptions \ref{as:rates} and \ref{as:vecFields} and denoting 
\begin{align}
\label{eq:GirsanovKernelDiffusion}
\theta_u\coloneqq \left( \Sigma_u\bar{\rho}_u \right)^{-1}\begin{psmallmatrix} \mu_u^1-r_u^1+k_u^1\\ \vdots \\ \mu_u^d-r_u^d+k_u^d \end{psmallmatrix},
\end{align}
it is immediate to conclude that the stochastic exponential
\begin{equation}\label{eq:stochExp}
    \mathcal E_0^t = \exp{-\int_0^t\theta_u^\intercal\ud W_u-\frac{1}{2}\int_0^t\theta_u^\intercal \theta_u\ud u}
\end{equation}
is a true martingale such that $W^\bQ$ defined in \eqref{eq:QBrow} is a Brownian motion according to the Girsanov theorem.

\subsection{Wealth Dynamics and Pricing under Different Measures}\label{sec:wealthDyn}
In what follows, we analyse the situation where collateral is exchanged in continuous time between the hedger and the counterparty to reduce possibly outstanding credit exposures.

Under the previously introduced probability space, we denote by $C$ an $\bF$-adapted stochastic process representing the exchanged collateral. 
$C^+$ represents the collateral received while $C^-$ is the collateral posted by the hedger. Collateral posted, resp. received, earns, resp. pays, an interest at a rate $r^{c,l}$, resp. $r^{c,b}$, and we set $C_T=0$, meaning that all collateral is returned at the terminal time when trading stops. The interest rates $r^{c,l}, r^{c,b}$ are assumed to be $\bF$-adapted and bounded processes. When collateral is posted via cash and the receiver is allowed to use it to fund own trading activity, then, according to equations 5.23 and 5.24 in \citet{bielecki2015valuation},  the wealth dynamics are as follows:
\begin{equation}\label{eq:wealth}
    \ud V_t(\varphi) = r_t V_t(\varphi)\ud t + \sum_{i=1}^{d}\xi_t^i(\ud S_t^i-r_t^i S_t^i\ud t+k_t^i S_t^i\ud t) + \ud \hat{F}_t^h+\ud A_t,
\end{equation}
where
\begin{equation}\label{eq:defF}
    \hat{F}_t^h \coloneqq \int_0^t C_u^+(r_u-r_u^{c,b})\ud u - \int_0^t C_u^-(r_u-r_u^{c,l})\ud u.
\end{equation}
We are then in a position to apply the representation stated in Section \ref{sec:relBSDE} so as to obtain pricing formulas under different measures.
\begin{assumption}\label{as:ass43}
    \item[a)] $A_t=p\mathbbm{1}_{[0,T]}(t)+X\mathbbm{1}_{[T]}(t)$ for $X\in \F_T=\F_T^S$, where by $\bF = \bF^S$ we mean that the filtration is generated by the risky assets, and $p$ is the initial price.
    \item[b)] The collateral process is of the form $C=c(V)$, where $c$ is a Lipschitz function.
\end{assumption}
Let us now apply Theorem \ref{th:mainTh}. We can substitute the $\P$ dynamics in \eqref{eq:Sdyn} inside \eqref{eq:wealth} as
\begin{align}
\begin{split}
    \ud V_t(\varphi) &= r_t V_t(\varphi)\ud t + \sum_{i=1}^{d}\xi_t^i(\mu_t^i S_t^i\ud t + S_t^i\left( \Sigma_t\bar{\rho}_t\ud W_t^\P \right)_i -r_t^i S_t^i\ud t+k_t^i S_t^i\ud t)\\
    &\qquad\qquad\qquad\qquad\qquad\qquad + \ud \hat{F}_t^h+\ud A_t
\end{split}
\\
\begin{split}
    &= r_t V_t(\varphi)\ud t + \left\langle \xi_t diag(S_t)\Sigma_t\bar{\rho}_t , \left( \Sigma_u\bar{\rho}_u \right)^{-1}\begin{psmallmatrix} \mu_u^1-r_u^1+k_u^1\\ \vdots \\ \mu_u^d-r_u^d+k_u^d \end{psmallmatrix}  \right\rangle \ud t \\
    &\qquad\qquad\qquad\qquad\qquad\qquad + \left\langle \xi_t diag(S_t)\Sigma_t\bar{\rho}_t , \ud W_t^\P \right\rangle + \ud \hat{F}_t^h+\ud A_t \; . 
\end{split}
\end{align}
We set $Z\coloneqq \xi diag(S)\Sigma \bar{\rho} \, \in \mathbb{R}^d$ and $\theta$ as in \eqref{eq:GirsanovKernelDiffusion},
therefore, we are left with a
BSDE providing a non-linear pricing rule, i.e.:
\begin{multline}\label{eq:wealthP}
\ud V_t(\varphi) = r_t V_t(\varphi)\ud t + Z_t^\intercal\theta_t\ud t + Z_t^\intercal\ud W_t^\P + (r_t-r_t^{c,b})C_t^+\ud t\\
- (r_t-r_t^{c,l})C_t^-\ud t+\ud A_t.
\end{multline}
Therefore, by  the results in Section \ref{sec:relBSDE}, we obtain the following version of Proposition 5.4 in \citet{bielecki2015valuation} under $\P$.

\begin{lemma}
Let $X$ be $\F_T^S$-measurable with $X\in\bL_T^2(\Omega,\F_T^S)$, then there exists a unique solution $(V,Z)\in\bS^2_T(\bR)\times\bH^2_T(\bR)$ to the BSDE \eqref{eq:wealthP}. The contract $A_t=p\mathbbm{1}_{[0,T]}(t)+X\mathbbm{1}_{[T]}(t)$ with collateral specification b) can be replicated by an admissible trading strategy $\xi$.  
We have $p_t=V_t$ and the process $V$ admits the following representation under $\P$:
\begin{multline}\label{eq:VunderP}
    p_t=V_t(\varphi) = -\bE^{\P} \left[ \frac{B_t}{B_T}X\mathcal E_t^T+ \int_t^T\frac{B_t}{B_u}\mathcal E_t^u \right.\\
    \left. \left[ (r_u-r_u^{c,b})C_u^+ -  (r_u-r_u^{c,l})C_u^- \right]\ud u |\F_t\right].
\end{multline}
\end{lemma}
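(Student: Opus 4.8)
The plan is to recognise this lemma as a specialisation of Theorem \ref{th:mainTh} to the purely diffusive market of this section (no jumps, so $\delta\equiv 0$ and $U\equiv 0$), supplemented by a standard linear-BSDE well-posedness argument and a replication check. I would proceed in three steps, applying the \emph{first} (i.e.\ $\P$-) representation of Theorem \ref{th:mainTh}, for which one only needs $\mathcal E$ to be a genuine density, not an actual change of measure.

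First I would settle existence and uniqueness of $(V,Z)$. By Assumption \ref{as:rates} the rates $r,r^{c,b},r^{c,l}$ are bounded, and by Assumptions \ref{as:rates}--\ref{as:vecFields} together with the invertibility of $\Sigma\bar\rho$ the Girsanov kernel $\theta$ in \eqref{eq:GirsanovKernelDiffusion} is bounded; moreover, under the collateral specification b) of Assumption \ref{as:ass43} we have $C=c(V)$ with $c$ Lipschitz, so $v\mapsto (r-r^{c,b})(c(v))^+-(r-r^{c,l})(c(v))^-$ is Lipschitz because $v\mapsto v^\pm$ is $1$-Lipschitz. Hence the driver of \eqref{eq:wealthP} is Lipschitz in $(V,Z)$ uniformly in $(\omega,t)$ and the terminal datum $X\in\bL_T^2(\Omega,\F_T^S)$ is square integrable, so existence and uniqueness of $(V,Z)\in\bS_T^2(\bR)\times\bH_T^2(\bR)$ follow from Theorem \ref{th:uniqSol} read in the degenerate case $\gamma\equiv 0$, $U\equiv 0$ (equivalently, from the classical linear-BSDE result of Pardoux--Peng).

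Second, I would construct the replicating strategy. Since $\Sigma_t\bar\rho_t$ is invertible and $S_t$ is componentwise strictly positive, the relation $Z=\xi\,diag(S)\Sigma\bar\rho$ is solved by $\xi_t=Z_t(diag(S_t)\Sigma_t\bar\rho_t)^{-1}$, giving $\bF$-predictable risky-asset positions; the repo positions are then fixed by \eqref{eq:repoConst} via $\psi_t^iB_t^i=-\xi_t^iS_t^i$, and the collateral accounts $\psi^{c,l},\psi^{c,b}$ are determined by $C=c(V)$ with $C_T=0$. Substituting these holdings into \eqref{eq:wealth} and using \eqref{eq:Sdyn} reproduces exactly \eqref{eq:wealthP}; admissibility follows from the integrability secured in the first step, so $\varphi$ replicates the contract and $p_t=V_t$. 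Third, I would read off \eqref{eq:VunderP} from Theorem \ref{th:mainTh}: comparing \eqref{eq:wealthP} with \eqref{eq:BSDE} fixes $\alpha(X_r^{t,x})=r_r$, $\beta(X_r^{t,x})=-\theta_r$ and $\delta\equiv 0$, whence $e^{-\int_t^u\alpha\,\ud s}=B_t/B_u$, the density \eqref{eq:densityMainTh} collapses to the purely continuous exponential \eqref{eq:stochExp}, the driver reduces to $f=-\big[(r-r^{c,b})C^+-(r-r^{c,l})C^-\big]$ and the terminal value is $g(X_T^{t,x})=-X$; substituting into the $\P$-representation of $Y_t^{t,x}$ gives \eqref{eq:VunderP}.

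The delicate point is the sign and terminal-condition bookkeeping induced by the cumulative cash-flow process $A_t=p\mathbbm{1}_{[0,T]}(t)+X\mathbbm{1}_{[T]}(t)$: its jump $\Delta A_T=X$ enters through $\ud A$ and, with the liquidation requirement that the hedged position be flat at $T$, forces $V_{T^-}=-X$, which (together with the convention by which the driver enters the forward dynamics of $V$ with a minus) produces the overall minus sign in \eqref{eq:VunderP}. A secondary care point is that $r,\theta,C$ here are general bounded adapted processes rather than deterministic functions of $X^{t,x}$; this is harmless, since the representation in Theorem \ref{th:mainTh} rests only on the martingale representation property and the It\^{o}--D\"{o}blin formula, both insensitive to the Markovian structure.
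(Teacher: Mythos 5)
Your proposal is correct and follows essentially the same route as the paper, which justifies this lemma implicitly by specialising Theorem \ref{th:mainTh} to the diffusive case $\delta\equiv 0$, $U\equiv 0$ with $\alpha=r$, $\beta=-\theta$, $g=-X$ and driver $f=-\big[(r-r^{c,b})C^+-(r-r^{c,l})C^-\big]$, delegating well-posedness and replication to the Lipschitz BSDE theory and to Proposition 5.4 of \citet{bielecki2015valuation}. Your additional bookkeeping (boundedness of $\theta$, recovery of $\xi$ from $Z=\xi\,diag(S)\Sigma\bar\rho$ by invertibility, and the sign coming from $V_{T^-}=-X$) fills in exactly the details the paper leaves to the cited references.
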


\begin{remark}
    In the formula above, we recognize the term $\frac{B_t}{B_T}\mathcal E_t^T$ as the stochastic discount factor or state price density in the terminology of financial economics, see e.g. \cite{bookDuffie}, \cite{bookBarucciFontana}.
\end{remark}

Next, by exploiting the result derived in Theorem \ref{th:mainTh}, see eq.\eqref{eq:mainThEqChange}, we can equivalently rewrite the BSDE under the measure $\bQ$ as 
\begin{equation}\label{eq:wealthQ}
\ud V_t(\varphi) = r_t V_t(\varphi)\ud t + Z_t^\intercal\ud W_t^\bQ + (r_t-r_t^{c,b})C_t^+\ud t
- (r_t-r_t^{c,l})C_t^-\ud t+\ud A_t\;,
\end{equation}
allowing us to reformulate Proposition 5.4 in \citet{bielecki2015valuation}.

\begin{lemma}\label{lem:QBSDE}
Let $X$ be $\F_T^S$-measurable with $X\in\bL_T^2(\Omega,\F_T^S)$, then $\exists!$ solution $(V,Z)\in\bS^2_T(\bR)\times\bH^2_T(\bR)$ to the BSDE \eqref{eq:wealthQ}. The contract $A_t=p\mathbbm{1}_{[0,T]}(t)+X\mathbbm{1}_{[T]}(t)$ with collateral specification b) in Assumption \ref{as:ass43} can be replicated by an admissible trading strategy $\xi$. We have $p_t=V_t(\varphi)$ and the process $V_t$ admits the representation under $\bQ$:
\begin{equation}
    p_t=V_t(\varphi) = -\bE^{\bQ} \left[ \frac{B_t}{B_T}X+ \int_t^T\frac{B_t}{B_u} \left[ (r_u-r_u^{c,b})C_u^+ -  (r_u-r_u^{c,l})C_u^- \right]\ud u |\F_t\right].
\end{equation}
\end{lemma}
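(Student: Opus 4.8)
The plan is to read Lemma~\ref{lem:QBSDE} as a direct corollary of Theorem~\ref{th:mainTh} in the degenerate no-jump case, applied to the wealth BSDE, together with the $\P$-representation \eqref{eq:VunderP}. The only genuinely new content relative to \eqref{eq:VunderP} is that the measure change encoded in \eqref{eq:mainThEqChange} turns the $\P$-formula into the cleaner $\bQ$-formula in which the density $\mathcal E_t^u$ no longer appears explicitly, having been absorbed into the change of Brownian motion.

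First I would cast the $\P$-wealth dynamics \eqref{eq:wealthP} into the template of the general BSDE \eqref{eq:BSDE} with all jump coefficients set to zero, since this is a pure-diffusion model. Under this identification $\alpha \leftrightarrow r$, so that the discount factor $e^{-\int_t^u \alpha\,\ud s}$ becomes $B_t/B_u$ via $\ud B_t = r_t B_t\,\ud t$, while the Girsanov kernel $\theta$ of \eqref{eq:GirsanovKernelDiffusion} plays the role of $\beta$ with $\beta \leftrightarrow -\theta$, so that the density process of Theorem~\ref{th:mainTh} coincides with the stochastic exponential $\mathcal E_0^t$ of \eqref{eq:stochExp}. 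I would then verify that assumptions \textbf{(A4)}--\textbf{(A7)} hold: the rates $r,r^{c,b},r^{c,l}$ are bounded by the standing hypotheses, the terminal datum $X$ lies in $\bL_T^2(\Omega,\F_T^S)$, and the collateral enters through $C=c(V)$ with $c$ Lipschitz by Assumption~\ref{as:ass43}~b), so the driver $(r-r^{c,b})C^+-(r-r^{c,l})C^-$ is Lipschitz in $V$. Existence and uniqueness of $(V,Z)\in\bS^2_T(\bR)\times\bH^2_T(\bR)$ then follow from Theorem~\ref{th:uniqSol}.

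Next I would invoke the change of measure part of Theorem~\ref{th:mainTh}. Since $\theta$ is bounded, because $\mu,\Sigma,r,k$ are bounded, $\mathcal E_0^t$ is a true $(\P,\F)$-martingale and Girsanov's theorem yields the equivalent measure $\bQ$ under which the process $W^\bQ$ of \eqref{eq:QBrow} is a Brownian motion. The algebraic substitution $\ud W_t^\P = \ud W_t^\bQ - \theta_t\,\ud t$ absorbs the $Z_t^\intercal\theta_t\,\ud t$ drift of \eqref{eq:wealthP}, turning it exactly into \eqref{eq:wealthQ}; since $\bQ\sim\P$ the same pair $(V,Z)$ solves both equations, which gives existence and uniqueness under $\bQ$. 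Applying \eqref{eq:mainThEqChange} with the above identifications and the vanishing jump term then produces the stated $\bQ$-representation: the discount factor is $B_t/B_u$, the driver is the collateral cash-flow term, and the density $\mathcal E_t^u$ visible in \eqref{eq:VunderP} has been consumed by the measure change and disappears.

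Finally, for the replication claim I would recover the strategy from the control $Z$: writing $Z=\xi\,\mathrm{diag}(S)\Sigma\bar\rho$ and using that $\Sigma_t\bar\rho_t$ is invertible for every $t$, one solves for the risky-asset positions $\xi$, while the cash-account holdings $\psi$ are pinned down by the repo constraint \eqref{eq:repoConst} together with the self-financing condition underlying \eqref{eq:wealth}; admissibility follows from $Z\in\bH^2_T(\bR)$. The main obstacle I anticipate is bookkeeping rather than analysis: one must check that the solution produced under $\bQ$ still belongs to the $\P$-based spaces $\bS^2_T,\bH^2_T$, which is guaranteed here because $\bQ\sim\P$ with a square-integrable density thanks to the boundedness of $\theta$, and one must track carefully the signs arising from the hedger's short position and from $\beta\leftrightarrow-\theta$ so that the $\bQ$-formula matches \eqref{eq:VunderP} once the density is removed.
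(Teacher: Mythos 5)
Your proposal is correct and follows exactly the route the paper takes: the paper gives no separate proof of Lemma~\ref{lem:QBSDE}, presenting it as an immediate consequence of Theorem~\ref{th:mainTh} (equation~\eqref{eq:mainThEqChange}) applied to the wealth BSDE~\eqref{eq:wealthP} with the identifications $\alpha\leftrightarrow r$, $\beta\leftrightarrow-\theta$ and vanishing jump coefficients, so that the substitution $\ud W^{\P}=\ud W^{\bQ}-\theta\,\ud t$ turns \eqref{eq:wealthP} into \eqref{eq:wealthQ} and the density $\mathcal E_t^u$ in \eqref{eq:VunderP} is absorbed into the measure change. Your additional verification of assumptions \textbf{(A4)}--\textbf{(A7)} and of the recovery of $\xi$ from $Z$ via the invertibility of $\Sigma_t\bar\rho_t$ merely fills in details the paper leaves implicit by reference to Proposition 5.4 of \citet{bielecki2015valuation}.
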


\begin{remark}
    A classical example for $X\in\bL_T^2(\Omega,\F_T^S)$ is given by a Lipschitz function of the risky assets $S^1,\ldots,S^d$ which covers e.g., European call and put options.
\end{remark}

\subsection{Benchmark approach interpretation}
In what follows we further analyse \eqref{eq:VunderP}, from the {\it Benchmark Approach} point of view. 
Let us explicitly state the form of the product of the ratio of cash-accounts times  the density process
\begin{align}\label{eq:stochExpDis}
    \frac{B_t}{B_T}\mathcal E_t^T &= \exp{-\int_t^T r_s\ud s -\frac{1}{2}\int_t^T\theta_s^\intercal \theta_s\ud s-\int_t^T\theta_s^\intercal\ud W^\P}\\
    &= \frac{\exp{\int_0^t r_s\ud s +\frac{1}{2}\int_0^t\theta_s^\intercal \theta_s\ud s+\int_0^t\theta_s^\intercal\ud W^\P}}{\exp{\int_0^T r_s\ud s +\frac{1}{2}\int_0^T\theta_s^\intercal \theta_s\ud s+\int_0^T\theta_s^\intercal\ud W^\P}}.
\end{align}
We recognize in this ratio the solution to the SDE satisfied by the \textit{Growth Optimal Portfolio} (GOP) in the benchmark approach of \citet{platen2006benchmark}, namely
\begin{equation}
    \frac{\ud S_t^{\delta^{\text{*}}}}{S_t^{\delta^{\text{*}}}} = r_t\ud t + \langle \theta_t, \theta_t\ud t + \ud W_t^\P \rangle.
\end{equation}
It is worth recalling that the GOP is the portfolio maximizing  the expected logarithmic utility. In \citet{platen2006benchmark}, the GOP plays a central role in financial evaluation. Indeed, it has the role of the numéraire for the physical probability measure $\P$, allowing for the derivation of a pricing formula for contingent claims under $\P$, i.e. the so-called \textit{real-world pricing formula}. 
The latter generalizes the risk-neutral valuation formula since it can be applied also when a risk-neutral measure does not exist. 
Exploiting our previous calculations, we deduce that \eqref{eq:VunderP} can be written as in the following corollary, hence extending the real world pricing formula derived in \citet{platen2006benchmark} in the presence of collateral and multiple interest rates.

\begin{corollary}
{[}The real World Pricing Formula - Case with collateral{]} The price of the claim $A_t=p\mathbbm{1}_{[0,T]}(t)+X\mathbbm{1}_{[T]}(t)$ with $X\in\bL_T^2(\Omega,\F_T^S)$ is given by
\begin{equation}
    p_t=V_t(\varphi) = -\bE^{\P} \left[ \frac{S_t^{\delta^{\text{*}}}}{S_T^{\delta^{\text{*}}}}X+ \int_t^T\frac{S_t^{\delta^{\text{*}}}}{S_u^{\delta^{\text{*}}}} \left[ (r_u-r_u^{c,b})C_u^+  -  (r_u-r_u^{c,l})C_u^- \right]\ud u |\F_t\right] \; ,
\end{equation}
where $S_t^{\delta^{\text{*}}}$ is the growth optimal portfolio with dynamics
\begin{equation}
    \frac{\ud S_t^{\delta^{\text{*}}}}{S_t^{\delta^{\text{*}}}} = r_t\ud t + \langle \theta_t, \theta_t\ud t + \ud W_t^\P \rangle.
\end{equation}
\end{corollary}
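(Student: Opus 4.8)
The plan is to derive the stated formula directly from the $\P$-representation already established in \eqref{eq:VunderP}, by showing that the discounted density $\frac{B_t}{B_u}\mathcal E_t^u$ appearing there coincides, for every $t\leq u\leq T$, with the inverse ratio $\frac{S_t^{\delta^{\text{*}}}}{S_u^{\delta^{\text{*}}}}$ of the growth optimal portfolio. Once this identification is in place, substituting it into \eqref{eq:VunderP} — once with $u=T$ in the terminal payoff term and once for the running integrand — yields the claimed real-world pricing formula, so that no new estimate is needed beyond the existence and representation already granted by the preceding lemma.

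First I would solve the linear SDE for $S^{\delta^{\text{*}}}$ in closed form. Applying the It\^{o}--D\"{o}blin lemma to $\ln S_t^{\delta^{\text{*}}}$ and using that the diffusion coefficient is $S_t^{\delta^{\text{*}}}\theta_t^\intercal$, so that $\ud[S^{\delta^{\text{*}}}]_t=(S_t^{\delta^{\text{*}}})^2\,\theta_t^\intercal\theta_t\,\ud t$, the drift $r_t+\theta_t^\intercal\theta_t$ is corrected by $-\tfrac12\theta_t^\intercal\theta_t$, giving
\begin{equation*}
S_t^{\delta^{\text{*}}}=S_0^{\delta^{\text{*}}}\exp\left\{\int_0^t r_s\,\ud s+\tfrac12\int_0^t\theta_s^\intercal\theta_s\,\ud s+\int_0^t\theta_s^\intercal\,\ud W_s^\P\right\}.
\end{equation*}
Next I would recall that Assumption \ref{as:rates} gives $\ud B_t=r_tB_t\,\ud t$, hence $B_t=\exp\{\int_0^t r_s\,\ud s\}$, and that $\mathcal E_0^t$ in \eqref{eq:stochExp} is the Dol\'{e}ans exponential with Girsanov kernel $-\theta$, so that $\mathcal E_t^u=\mathcal E_0^u/\mathcal E_0^t$. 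Collecting the exponents exactly as in \eqref{eq:stochExpDis}, but with upper limit $u$ in place of $T$, shows that $\frac{B_t}{B_u}\mathcal E_t^u=\exp\{-\int_t^u r_s\,\ud s-\tfrac12\int_t^u\theta_s^\intercal\theta_s\,\ud s-\int_t^u\theta_s^\intercal\,\ud W_s^\P\}$, which is precisely $S_t^{\delta^{\text{*}}}/S_u^{\delta^{\text{*}}}$ from the closed-form solution above, the factor $S_0^{\delta^{\text{*}}}$ cancelling in the ratio. Inserting these two identities into \eqref{eq:VunderP} produces the result.

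I do not expect a genuine obstacle here: the statement is merely a rewriting of \eqref{eq:VunderP} under the choice of $S^{\delta^{\text{*}}}$ as num\'{e}raire, and the computation \eqref{eq:stochExpDis} already carries out the $u=T$ case. The only point requiring care is the It\^{o} correction in the logarithmic transform: the $+\tfrac12\int\theta^\intercal\theta$ produced when solving the GOP SDE must be tracked together with the $-\tfrac12\int\theta^\intercal\theta$ carried by the Radon--Nikodym density, so that the three exponents in $\frac{B_t}{B_u}\mathcal E_t^u$ match those of $S_t^{\delta^{\text{*}}}/S_u^{\delta^{\text{*}}}$ term by term and with the correct signs.
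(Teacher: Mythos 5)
Your proposal is correct and follows essentially the same route as the paper: the text preceding the corollary computes $\frac{B_t}{B_T}\mathcal E_t^T$ as the exponential in \eqref{eq:stochExpDis} and identifies that ratio with $S_t^{\delta^{\text{*}}}/S_T^{\delta^{\text{*}}}$ by recognizing the GOP dynamics, exactly the identification you establish by solving the GOP SDE explicitly. Your version is slightly more careful in actually verifying the It\^{o} correction and in noting that the same identity must be applied with upper limit $u$ inside the running integral, but the substance is identical.
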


\subsection{Exchange option example}\label{sec:excOpt}
In this subsection we reconsider the classical example of an exchange option, see \citet{geman1995changes}, but in the presence of collateral and multiple interest rates. 
We define $\tilde{V}=\frac{V}{B}$ the discounted wealth process. From Equation \eqref{eq:wealthQ}, the dynamics under the measure $\bQ$ are given by 
\begin{equation}\label{eq:wealthQdiscExcOpt}
\ud \tilde{V}_t(\varphi) = \frac{Z_t^\intercal}{B_t}\ud W_t^\bQ + \frac{1}{B_t}\left( (r_t-r_t^{c,b})C_t^+  - (r_t-r_t^{c,l})C_t^- \right)\ud t + \frac{1}{B_t}\ud A_t.
\end{equation}

From Proposition \ref{prop:prop3_1} we know that the ratio $\frac{S^2}{B^2}$ is a $(\bQ,\bF)$-local martingale.
Such ratio can be used to construct a new probability measure. Let us first compute the dynamics of $\frac{B^2}{S^2}$ under $\bQ$ as 
\begin{align}\label{eq:invS}
\ud \left( \frac{B_t^2}{S_t^2}\right) ={}& \frac{B_t^2r_t^2\ud t}{S_t^2} + B_t^2 \left( - \frac{1}{(S_t^2)^2} \left(r_t^2S_t^2\ud t + S_t^2\left(\Sigma_t \bar{\rho}_t\ud W_t^\bQ \right)_2 \right.\right. \\
&\qquad\qquad\qquad\qquad\qquad\qquad\qquad\qquad \left.\left. + \frac{1}{2}\frac{2}{(S_t^2)^3}(S_t^2)^2(\sigma^2)^2\ud t\right)\right)\\
=& \frac{B_t^2}{S_t^2} \left( (\sigma_t^2)^2\ud t - \sigma_t^2\sum_{j=1}^d\bar{\rho}_{2,j}\ud W_t^{\bQ,j} \right) = \frac{B_t^2}{S_t^2} \left( (\sigma_t^2)^2\ud t - \sigma_t^2\bar{\rho}_{2,\cdot,t} \ud W_t^{\bQ} \right)  \; ,
\end{align} 
where $\left(\Sigma_t \bar{\rho}_t\ud W_t^\bQ \right)_2$ refers to the second coordinate of the vector $\left(\Sigma_t \bar{\rho}_t\ud W_t^\bQ \right)$ and $\bar{\rho}_{2,\cdot,t}$ is the second row of $\bar{\rho}_{t}$. We can write
\begin{align}\label{eq:invS2}
\ud \left( \frac{B_t^2}{S_t^2}\right) ={}& \frac{B_t^2}{S_t^2} \left( (\sigma_t^2)^2 \frac{\bar{\rho}_{2,\cdot,t}\bar{\rho}_{\cdot,2,t}}{\bar{\rho}_{2,\cdot,t}\bar{\rho}_{\cdot,2,t}} \ud t - \sigma_t^2\bar{\rho}_{2,\cdot,t}\ud W_t^{\bQ} \right) \\
={}& \frac{B_t^2}{S_t^2} \left( -\sigma_t^2\bar{\rho}_{2,\cdot,t}\left( \ud W_t^{\bQ} - \sigma_t^2\frac{\bar{\rho}_{\cdot,2,t}}{\bar{\rho}_{2,\cdot,t}\bar{\rho}_{\cdot,2,t}}\ud t\right)\right) \; \,,
\end{align} 
where we have $\bar{\rho}_{2,\cdot,t}\bar{\rho}_{\cdot,2,t}=\rho_{2,2}=1$,  since $\bar{\rho}$ is a Cholesky factorization of the correlation matrix.

\noindent Define
\begin{itemize}\label{eq:defQS2}
    \item $\frac{\partial \bQ^{S^2}}{\partial \bQ}\bigg{|}_{\F_T}\coloneqq \frac{S_T^2}{B_T^2}\frac{B_0^2}{S_0^2}  \; ;$
    \item $\ud W_t^{\bQ^{S^2}}\coloneqq \ud W_t^{\bQ} - \sigma_t^2\frac{\left(\bar{\rho}_{2,\cdot,t}\right)^\intercal}{\bar{\rho}_{2,\cdot,t}\bar{\rho}_{\cdot,2,t}}\ud t  \; .$
\end{itemize}
Since
\begin{multline}\label{eq:wealthQdisc}
\ud \tilde{V}_t(\varphi) = \frac{1}{B_t} \sum_{i=1}^d\sum_{j=1}^d\xi_t^1S_t^i\sigma_t^i\bar{\rho}_{i,j} \ud W_t^{\bQ,j} \\
+ \frac{1}{B_t}\left( (r_t-r_t^{c,b})C_t^+  - (r_t-r_t^{c,l})C_t^- \right)\ud t + \frac{1}{B_t}\ud A_t\;,
\end{multline}
then the dynamics of the product $\frac{V}{B}\frac{B^2}{S^2}$ is given by:
\begin{align}
    \ud \left( \frac{V}{B}\frac{B^2}{S^2} \right)_t & = \frac{B_t^2}{S_t^2} \left( \frac{1}{B_t} \sum_{i=1}^d\sum_{j=1}^d\xi_t^1S_t^i\sigma_t^i\bar{\rho}_{i,j} \ud W_t^{\bQ,j} \right.\\
&\qquad\quad \left. + \frac{1}{B_t}\left( (r_t-r_t^{c,b})C_t^+  - (r_t-r_t^{c,l})C_t^- \right)\ud t + \frac{1}{B_t}\ud A_t \right) \\
&\qquad\quad + \frac{V_t}{B_t}\frac{B_t^2}{S_t^2} \left( (\sigma_t^2)^2\ud t - \sigma_t^2\sum_{j=1}^d\bar{\rho}_{2,j}\ud W_t^{\bQ,j} \right)\\
&\qquad\quad +\ud \left\langle \sum_{i,j=1}^d\int_0^\cdot\xi_u^1S_u^i\sigma_u^i\bar{\rho}_{i,j,u} \ud W_u^{\bQ,j}, - \sum_{j=1}^d\int_0^\cdot\sigma_u^2\bar{\rho}_{2,j,u}\ud W_u^{\bQ,j} \right\rangle_t \frac{B_t^2}{B_t S_t^2} \; .
\end{align}
Concerning the quadratic covariation term we have
\begin{align}
& \ud \left\langle \sum_{i,j=1}^d\int_0^\cdot\xi_u^1S_u^i\sigma_u^i\bar{\rho}_{i,j,u} \ud W_u^{\bQ,j}, - \sum_{j=1}^d\int_0^\cdot\sigma_u^2\bar{\rho}_{2,j,u}\ud W_u^{\bQ,j} \right\rangle_t \frac{B_t^2}{B_t S_t^2} \\
& = -\sum_{i=1}^d\xi_t^1S_t^i\sigma_t^i\sigma_t^2 \sum_{j=1}^d\sum_{k=1}^d \bar{\rho}_{i,j,t}\bar{\rho}_{2,k,t} \ud\langle  W_\cdot^{\bQ,j},\ud W_\cdot^{\bQ,k} \rangle_t \frac{B_t^2}{B_t S_t^2}\\
& = -\frac{B_t^2}{B_t S_t^2} \sum_{i=1}^d\xi_t^1S_t^i\sigma_t^i\sigma_t^2 \sum_{k=1}^d \bar{\rho}_{i,k,t}\bar{\rho}_{2,k,t} \ud t\\
& = -\frac{B_t^2}{B_t S_t^2} \left( \xi_t diag (S_t) \Sigma_t\bar{\rho}_t \right) \sigma_t^2(\bar{\rho}_{2,\cdot,t})^\intercal \ud t \; ,
\end{align}
where we recall that $\xi_t diag (S_t) \Sigma_t\bar{\rho}_t=Z_t$ and $\bar{\rho}_{2,\cdot,t}$ denotes the second row of $\bar{\rho}_t$. Collecting all terms we have
\begin{align}
\begin{aligned}
\ud \left( \frac{V}{B}\frac{B^2}{S^2} \right)_t & = \frac{B_t^2}{S_t^2} \left( \frac{1}{B_t} Z_t^\intercal \ud W_t^{\bQ}  + \frac{1}{B_t}\left( (r_t-r_t^{c,b})C_t^+  - (r_t-r_t^{c,l})C_t^- \right)\ud t + \frac{1}{B_t}\ud A_t \right) \\
&\qquad\quad + \frac{V_t}{B_t}\frac{B_t^2}{S_t^2}  \left( -\sigma_t^2\bar{\rho}_{2,\cdot,t}\left( \ud W_t^{\bQ} - \sigma_t^2\bar{\rho}_{\cdot,2,t}\ud t\right)\right) -\frac{B_t^2}{B_t S_t^2} Z_t^\intercal \sigma_t^2(\bar{\rho}_{2,\cdot,t})^\intercal \ud t\;,
\end{aligned}\\
\end{align}
therefore, after rearranging, we obtain
\begin{align}
\begin{aligned}
\ud \left( \frac{V}{B}\frac{B^2}{S^2} \right)_t & = \frac{B_t^2}{S_t^2} \left[ \frac{1}{B_t} Z_t^\intercal \left(\ud W_t^{\bQ} -\sigma_t^2(\bar{\rho}_{2,\cdot,t})^\intercal \ud t\right) \right.\\
&\qquad\quad \left. + \frac{1}{B_t}\left( (r_t-r_t^{c,b})C_t^+  - (r_t-r_t^{c,l})C_t^- \right)\ud t+ \frac{1}{B_t}\ud A_t \right] \\
&\qquad\quad  - \frac{V_t}{B_t}\frac{B_t^2}{S_t^2}\sigma_t^2\bar{\rho}_{2,\cdot,t}  \left( \ud W_t^{\bQ} - \sigma_t^2(\bar{\rho}_{2,\cdot,t})^\intercal \ud t\right) \; .
\end{aligned}\\
\end{align}

If we now define $\tilde{Z}\coloneqq Z_t-V_t\sigma_t^2\bar{\rho}_{2,\cdot,t}$, then, by the Girsanov Theorem, we have:
\begin{multline}
\ud \left( \frac{V_t}{B_t}\frac{B_t^2}{S_t^2} \right) =  \frac{B_t^2}{S_t^2B_t} \tilde{Z}^\intercal\ud W_t^{\bQ^{S^2}} + \frac{B_t^2}{S_t^2B_t}\left[ \left((r_t-r_t^{c,b})C_t^+  - (r_t-r_t^{c,l})C_t^- \right)\ud t  + \ud A_t\right]   \; .
\end{multline}
We omit the conditional expectation representation of the BSDE above, since it is immediate. Now, under the further assumption $r^2=r$, by applying Theorem \ref{th:mainTh} we recover, the usual change of Numéraire, but in the presence of a non-linear, recursive integral term:
\begin{equation}
    p_t = - S_t^2\bE^{\bQ^{S^2}} \left[ \frac{X}{S_T^2}+ \int_t^T\frac{1}{S_s^2} \left[ (r_s-r_s^{c,b})C_s^+ -  (r_s-r_s^{c,l})C_s^- \right]\ud s |\F_t\right] \; .
\end{equation}
In the present example, $X$ is the payoff of the exchange option, i.e.
\begin{equation}
    X=(S_T^1-S_T^2)^+.
\end{equation}
In summary, we obtain
\begin{equation}
    p_t = - S_t^2\bE^{\bQ^{S^2}} \left[ \frac{(S_T^1-S_T^2)^+}{S_T^2}+ \int_t^T\frac{1}{S_s^2} \left[ (r_s-r_s^{c,b})C_s^+  -  (r_s-r_s^{c,l})C_s^- \right]\ud s |\F_t\right] \; ,
\end{equation}
i.e.
\begin{equation}
    p_t = - S_t^2\bE^{\bQ^{S^2}} \left[ \left(\frac{S_T^1}{S_T^2}-1\right)^+ + \int_t^T\frac{1}{S_s^2} \left[ (r_s-r_s^{c,b})C_s^+ -  (r_s-r_s^{c,l})C_s^- \right]\ud s \big|\F_t\right] \; .
\end{equation}
When there is no collateral, we recover the classical Margrabe's formula under the standard Black-Scholes multivariate model.

\subsection{Pure jump setting example}
We assume that the risky asset evolves now according to
\begin{equation}
    \frac{\ud S_t^i}{S_{t-}^i} = \mu^i\ud t+\int_\bR(e^z-1)(N-\nu^\P)(\ud t,\ud z)\; .
\end{equation}
which corresponds to the SDE associated to the class of exponential Lévy models.
For the sake of the present example, we assume that $N$ has compensator $\nu^\P(\ud z)\ud t = \lambda\mathbbm{1}_\alpha(\ud z) \ud t$, i.e. we consider a Poisson process with constant intensity $\lambda$ and fixed jump size $\alpha$. We construct the martingale measure along the following steps. In line with \citet{bielecki2015valuation}, our objective is to guarantee that the process 
\begin{equation}
    \tilde{S}_t^i \coloneqq \left( \frac{S^i}{B^i}\right)_{0\leq t\leq T}
\end{equation}
is a local martingale.
Consider next
\begin{align}
    \ud \tilde{S}_t^i & = \tilde{S}_{t-}^i\left( (\mu^i-r^i)\ud t +\int_\bR(e^z-1)(N-\nu^\P)(\ud t,\ud z)\right) \\
    & = \tilde{S}_{t-}^i\left( (e^\alpha-1)N(\ud t,\lbrace\alpha\rbrace) - \int_\bR(e^z-1)\lambda\mathbbm{1}_\alpha(z)\ud z\ud t)+(\mu^i-r^i)\ud t\right)\\
    & = \tilde{S}_{t-}^i\left( (e^\alpha-1)N(\ud t,\lbrace\alpha\rbrace) - \left((e^\alpha-1)\lambda-(\mu^i-r^i) \right)\ud t \right)\\
    & = \tilde{S}_{t-}^i\left( (e^\alpha-1)N(\ud t,\lbrace\alpha\rbrace) - (e^\alpha-1) \left(\lambda-\frac{\mu^i-r^i}{e^\alpha-1} \right)\ud t \right)\\
    & = \tilde{S}_{t-}^i\left( (e^\alpha-1)N(\ud t,\lbrace\alpha\rbrace) - (e^\alpha-1)\lambda \left(1-\frac{\mu^i-r^i}{(e^\alpha-1)\lambda} \right)\ud t \right) \; .
\end{align}
Recalling the notation of Theorem \ref{th:mainTh}, this means that we have
\begin{equation}
    \delta (z) = \delta = -\frac{\mu-t}{(e^\alpha -1)\lambda} \; ,
\end{equation}
which is the Girsanov kernel in the present setting. Under $\bQ$, we have
\begin{equation}
    \frac{\ud S_t^i}{S_{t-}^i} = r^i\ud t+\int_\bR(e^z-1)(N-\nu^\P)(\ud t,\ud z) \; ,
\end{equation}
with
\begin{align}
    \nu^\bQ (\ud z) & = (1+\delta (z))\nu^\P(\ud z)\\
    & = \left(1-\frac{\mu-r}{(e^\alpha-1)\lambda}\right) \lambda \mathbbm{1}_\alpha (\ud z) \; .
\end{align}
In summary we have:
\begin{description}
  \item[Under $\P$] 
    \begin{equation}
        \frac{\ud S_t^i}{S_{t-}^i} = \mu^i\ud t+\int_\bR(e^z-1)(N-\nu^\P)(\ud t,\ud z)\; , \; i.e.
    \end{equation}
    \begin{equation}
        S_t^i = S_0^i\exp{\mu t-\int_\bR (e^z-1-z)\nu(\ud z)t + \int_\bR z(N-\nu^\P)(\ud t, \ud z)}\; .
    \end{equation}
  \item[Under $\bQ$] 
      \begin{equation}
        \frac{\ud S_t^i}{S_{t-}^i} = r^i\ud t+\int_\bR(e^z-1)(N-\nu^\bQ)(\ud t,\ud z)\; , \; 
    \end{equation}
    \begin{equation}
        \nu^\bQ (\ud z) = (1+\delta (z))\nu^\P(\ud z) = \left(1-\frac{\mu-r}{(e^\alpha-1)\lambda}\right) \lambda \mathbbm{1}_\alpha (\ud z) \; .
    \end{equation}
    \item[Density process $\mathcal E_t^T$] 
    \begin{multline}
    \mathcal E_t^T = \exp\left\lbrace \int_t^T\int_{\bR^d}\ln(1+\delta(z)) \right.\\
    \left. -\delta(z)\nu^{\P}(\ud z) \ud r + \int_t^T\int_{\bR^d}\ln(1+\delta(z))\tilde{N}(\ud r,\ud z)\right\rbrace \, .
\end{multline}
\end{description}

\subsubsection{Portfolio dynamics with jumps}
We start again from the wealth dynamics
\begin{equation}
    \ud V_t(\varphi) = r_t V_t(\varphi)\ud t + \sum_{i=1}^{d}\xi_t^i(\ud S_t^i-r_t^i S_t^i\ud t+k_t^i S_t^i\ud t) + \ud \hat{F}_t^h+\ud A_t \; ,
\end{equation}
then, substituting the $\P$ dynamics of $S$, and setting $d=1$, we have
\begin{multline}
    \ud V_t(\varphi) = r_t V_t(\varphi)\ud t + \xi_t^1\left( S_{t-}^1 \left( \mu^1\ud t+\int_\bR(e^z-1)(N-\nu^\P)(\ud t,\ud z)\right) - r^1S^1\ud t\right) \\
    + \ud \hat{F}_t^h+\ud A_t \; ,
\end{multline}
it follows that, setting
\begin{equation}
    U^1 \coloneqq \xi^1S_{t-}^1
\end{equation}
we obtain
\begin{equation}
\begin{aligned}\label{eq:PBSDE}
    \ud V_t(\varphi) &= r_t V_t(\varphi)\ud t + U_t^1(\mu^1-r^1)\ud t + U_{t-}^1\int_\bR(e^z-1)(N-\nu^\P)(\ud t,\ud z) \\
    & \qquad\qquad\qquad\qquad\qquad\qquad\qquad\qquad\qquad\qquad\qquad + \ud \hat{F}_t^h+\ud A_t \\
    &= r_t V_t(\varphi)\ud t + \int_\bR U_{t-}^1\frac{\mu^1-r^1}{(e^\alpha-1)\lambda}(e^z-1)\nu^\P(\ud z) \ud t \\
    & \qquad\qquad\qquad\qquad\ +\int_\bR U_{t-}^1(e^z-1)(N-\nu^\P)(\ud t,\ud z) + \ud \hat{F}_t^h+\ud A_t \; ,
\end{aligned}
\end{equation}
allowing us to apply
Theorem \ref{th:mainTh} in the present pure jump setting. 
\begin{lemma}
Let $X$ be $\F_T^S$-measurable with $X\in\bL_T^2(\Omega,\F_T^S)$, then there exists a unique solution $(V,Z)\in\bS^2_T(\bR)\times\bH^2_{T,N}(\bR)$ to the BSDE \eqref{eq:PBSDE}. The contract $A_t=p\mathbbm{1}_{[0,T]}(t)+X\mathbbm{1}_{[T]}(t)$ with collateral specification b) in Assumption \ref{as:ass43}, can be replicated by an admissible trading strategy $\xi$. We have $p_t=V_t(\varphi)$ and the process $V$ admits the representation under $\P$.
\begin{multline}\label{eq:VunderPjump}
    p_t=V_t(\varphi) = -\bE^{\P} \left[ \frac{B_t}{B_T}X\mathcal E_t^T + \int_t^T \frac{B_t}{B_u}\mathcal E_t^u \right.\\
    \left. + \left[ (r_u-r_u^{c,b})C_u^+ -  (r_u-r_u^{c,l})C_u^- \right]\ud u |\F_t\right].
\end{multline}
\end{lemma}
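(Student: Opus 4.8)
The plan is to recognise the wealth equation \eqref{eq:PBSDE}, together with the terminal condition imposed by the contract, as a concrete instance of the jump FBSDE \eqref{eq:BSDE}, and then to invoke Theorem \ref{th:uniqSol} for well-posedness and the under-$\P$ representation of Theorem \ref{th:mainTh} for \eqref{eq:VunderPjump}. First I would read off the coefficients of the abstract driver from \eqref{eq:PBSDE}: the coefficient multiplying the value process (denoted $\alpha(X_r^{t,x})$ in \eqref{eq:BSDE}) is here the funding rate $r$; the diffusive Girsanov kernel $\beta$ vanishes, since the exponential-Lévy asset carries no Brownian component; and the jump kernel is the constant $\delta(z)\equiv -\frac{\mu^1-r^1}{(e^\alpha-1)\lambda}$ already computed above. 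The jump control $U^1$ is linked to the number of shares through $U^1=\xi^1 S_{t-}^1$, while the Lipschitz driver is supplied by the collateral stream $(r_u-r_u^{c,b})C_u^+-(r_u-r_u^{c,l})C_u^-$ with $C=c(V)$, $c$ Lipschitz by Assumption \ref{as:ass43} b). The terminal datum $X\in\bL_T^2(\Omega,\F_T^S)$ is delivered by the jump of $A$ at $T$.

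Next I would verify the hypotheses of Theorem \ref{th:uniqSol}. Boundedness of the funding and collateral rates yields (A6)–(A7), with (A7) trivial since $\beta\equiv 0$; because $\delta$ is constant and the compensator $\nu^\P=\lambda\mathbbm{1}_\alpha$ is finite, the square-integrability in (A8) is immediate, while the requirement $\delta>-1$ is exactly the positivity of the risk-neutral intensity $\nu^\bQ=(1+\delta)\nu^\P$ obtained above. Composing the Lipschitz collateral map $c$ with the bounded spreads gives a driver Lipschitz in $V$, i.e. (A4), and $X\in\bL_T^2$ supplies a square-integrable terminal condition. Theorem \ref{th:uniqSol} then produces a unique pair $(V,U^1)\in\bS_T^2(\bR)\times\bH_{T,N}^2(\bR)$ (the component denoted $Z$ in the statement being the jump integrand $U^1$). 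For the replication claim I would set $\xi^1:=U^1/S_{t-}^1$ and fix the repo account through \eqref{eq:repoConst}; substituting back reproduces \eqref{eq:PBSDE}, so the self-financing portfolio has wealth $V$, matches all contractual cash flows, and is admissible because $U^1\in\bH_{T,N}^2(\bR)$. This identifies the price as $p_t=V_t(\varphi)$.

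Finally I would apply the first (under-$\P$) representation of Theorem \ref{th:mainTh} with the coefficients identified above. With the discounting coefficient equal to $r$, the factor $e^{-\int_t^\cdot\alpha(X_s^{t,x})\ud s}$ becomes $B_t/B_\cdot$; the density $\mathcal E_t^T$ of \eqref{eq:densityMainTh} collapses, upon setting $\beta\equiv 0$, to the pure-jump expression recorded in the ``Density process'' item above; the terminal term carries $X$, and the driver contributes the discounted collateral integral, reproducing \eqref{eq:VunderPjump} exactly as in the diffusive analogue \eqref{eq:VunderP}. The hard part will be the bookkeeping that converts the replication constraint $V_T=0$ into the overall minus sign and the correct orientation of the collateral term in \eqref{eq:VunderPjump}, together with the one genuinely model-dependent check, namely that $\delta>-1$, i.e. $1-\frac{\mu^1-r^1}{(e^\alpha-1)\lambda}>0$, so that $\mathcal E^T$ is a true $(\P,\F)$-martingale and the representation is licit.
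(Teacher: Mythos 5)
Your proposal is correct and follows exactly the route the paper intends: the paper states this lemma without an explicit proof, presenting it as a direct instantiation of Theorem \ref{th:uniqSol} and the under-$\P$ representation of Theorem \ref{th:mainTh} with $\alpha=r$, $\beta\equiv 0$ and the constant jump kernel $\delta=-\tfrac{\mu^1-r^1}{(e^\alpha-1)\lambda}$, which is precisely your identification. Your explicit verification of (A4)--(A8), in particular the condition $\delta>-1$ ensuring $\mathcal E^T$ is a true martingale, supplies detail the paper leaves implicit but introduces nothing different in substance.
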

Next,  again using the change of measure formula of Theorem \ref{th:mainTh}, we can rewrite the BSDE under the measure $\bQ$ as 
\begin{equation}
    \ud V_t(\varphi) = r_t V_t(\varphi)\ud t + \int_\R U_{t-}^1(e^z-1)(N-\nu^\P)(\ud t,\ud z)  + \hat{F}_t^h+\ud A_t \; .
\end{equation}
Finally, we can reformulate Proposition 5.4 in \citet{bielecki2015valuation} in a pure jump setting. We omit the statement since it is analogous to our Lemma \ref{lem:QBSDE}. 

\subsection{A generalization of the change of numéraire technique}
The reasoning we illustrated in Section \ref{sec:wealthDyn} provides us with a generalization of the well-known change of numéraire technique that covers recursive conditional expectations.
The technique involves the following steps:
\begin{enumerate}
    \item Start by considering wealth dynamics under a starting probability measure $\bQ$, for example as in Equation \ref{eq:wealthQ} for $\tilde{V}$;
    \item Consider a traded asset, suitably discounted so as to treat a $(\bQ,\bF)$-martingale (the ratio $\frac{S^2}{B^2}$ in Section \ref{sec:excOpt}) and compute its (inverse) dynamics via the It\^{o} formula;
    \item Identify the stochastic drivers under the new measure $\bQ^\prime$;
    \item Compute the dynamics of the product between $\tilde{V}$ and the (inverse) density process;
    \item Apply Theorem \ref{th:mainTh} to obtain the expression for the price under the new probability measure $\bQ^\prime$.
\end{enumerate}

\section*{Acknowledgement}
We are grateful to Samuel Cohen for drawing our attention to useful references.
\bibliographystyle{plainnat}
\bibliography{BiblioBsde}

\begin{thebibliography}{19}
\providecommand{\natexlab}[1]{#1}
\providecommand{\url}[1]{\texttt{#1}}
\expandafter\ifx\csname urlstyle\endcsname\relax
  \providecommand{\doi}[1]{doi: #1}\else
  \providecommand{\doi}{doi: \begingroup \urlstyle{rm}\Url}\fi

\bibitem[Applebaum(2009)]{applebaum2004levy}
David Applebaum.
\newblock \emph{L{\'e}vy Processes and Stochastic Calculus}.
\newblock Number 116 in Cambridge Studies in Advanced Mathematics. Cambridge
  University Press, 2nd edition, 2009.
\newblock ISBN 978-0521738651.

\bibitem[Barucci and Fontana(2017)]{bookBarucciFontana}
Emilio Barucci and Claudio Fontana.
\newblock \emph{Financial Markets Theory: Equilibrium, Efficiency and
  Information (Springer Finance)}.
\newblock Springer, 2nd edition, 10 2017.
\newblock ISBN 978-1447110934.

\bibitem[Biagini et~al.(2021)Biagini, Gnoatto, and Oliva]{biagini2021unified}
Francesca Biagini, Alessandro Gnoatto, and Immacolata Oliva.
\newblock A unified approach to {xVA} with {CSA} discounting and initial
  margin.
\newblock \emph{SIAM Journal on Financial Mathematics}, 12\penalty0
  (3):\penalty0 1013--1053, 2021.

\bibitem[Bichuch et~al.(2018)Bichuch, Capponi, and Sturm]{bichuch2018arbitrage}
Maxim Bichuch, Agostino Capponi, and Stephan Sturm.
\newblock Arbitrage-free {XVA}.
\newblock \emph{Mathematical Finance}, 28\penalty0 (2):\penalty0 582--620,
  2018.

\bibitem[Bielecki and Rutkowski(2015)]{bielecki2015valuation}
Tomasz~R Bielecki and Marek Rutkowski.
\newblock Valuation and hedging of contracts with funding costs and
  collateralization.
\newblock \emph{SIAM Journal on Financial Mathematics}, 6\penalty0
  (1):\penalty0 594--655, 2015.

\bibitem[Bismut(1973)]{bismut1973}
Jean-Michel Bismut.
\newblock Conjugate convex functions in optimal stochastic control.
\newblock \emph{Journal of Mathematical Analysis and Applications}, 44\penalty0
  (2):\penalty0 384--404, 1973.
\newblock ISSN 0022-247X.
\newblock \doi{https://doi.org/10.1016/0022-247X(73)90066-8}.

\bibitem[Bjork(2020)]{bjork2020}
Tomas Bjork.
\newblock \emph{Arbitrage Theory in Continuous Time (Oxford Finance Series)}.
\newblock Oxford University Press, 4th edition, 2020.
\newblock ISBN 978-0198851615.

\bibitem[Brigo et~al.(2019)Brigo, Francischello, and Pallavicini]{BRIGO2019788}
Damiano Brigo, Marco Francischello, and Andrea Pallavicini.
\newblock Nonlinear valuation under credit, funding, and margins: Existence,
  uniqueness, invariance, and disentanglement.
\newblock \emph{European Journal of Operational Research}, 274\penalty0
  (2):\penalty0 788--805, 2019.
\newblock ISSN 0377-2217.
\newblock \doi{https://doi.org/10.1016/j.ejor.2018.10.046}.
\newblock URL
  \url{https://www.sciencedirect.com/science/article/pii/S0377221718309147}.

\bibitem[Cohen and Elliott(2015)]{cohen2015stochastic}
Samuel Cohen and Robert~J. Elliott.
\newblock \emph{Stochastic calculus and applications}.
\newblock Birkhäuser, New York, NY, 2nd edition, 2015.
\newblock ISBN 978-1-4939-2867-5.

\bibitem[Delong(2013)]{delong2013backward}
\L{}ukasz Delong.
\newblock \emph{Backward Stochastic Differential Equations with Jumps and Their
  Actuarial and Financial Applications: BSDEs with Jumps (EAA Series)}.
\newblock Springer, 1st edition, 2013.
\newblock ISBN 978-1447153306.

\bibitem[Duffie(2001)]{bookDuffie}
Darrell Duffie.
\newblock \emph{Dynamic Asset Pricing Theory}.
\newblock Princeton University Press, 3rd edition, 11 2001.
\newblock ISBN 978-0691090221.

\bibitem[El~Karoui et~al.(1997)El~Karoui, Peng, and Quenez]{el1997backward}
Nicole El~Karoui, Shige Peng, and Marie~Claire Quenez.
\newblock Backward stochastic differential equations in finance.
\newblock \emph{Mathematical finance}, 7\penalty0 (1):\penalty0 1--71, 1997.

\bibitem[Geman et~al.(1995)Geman, El~Karoui, and Rochet]{geman1995changes}
Helyette Geman, Nicole El~Karoui, and Jean-Charles Rochet.
\newblock Changes of num\'eraire, changes of probability measure and option
  pricing.
\newblock \emph{Journal of Applied probability}, 32\penalty0 (2):\penalty0
  443--458, 1995.

\bibitem[Gnoatto and Seiffert(2021)]{gnoattoSeiffert2021}
Alessandro Gnoatto and Nicole Seiffert.
\newblock Cross currency valuation and hedging in the multiple curve framework.
\newblock \emph{SIAM Journal on Financial Mathematics}, 12\penalty0
  (3):\penalty0 967--1012, January 2021.
\newblock \doi{10.1137/20m1324375}.
\newblock URL \url{https://doi.org/10.1137/20m1324375}.

\bibitem[Jacod and Shiryaev(2003)]{jacod2003limit}
Jean Jacod and Albert~N Shiryaev.
\newblock \emph{Limit theorems for stochastic processes}.
\newblock Number 288 in Grundlehren der mathematischen Wissenschaften.
  Springer, 2nd edition, 2003.

\bibitem[Ma and Yong(1999)]{maYong1999}
Jin Ma and Jiongmin Yong.
\newblock \emph{Forward-Backward Stochastic Differential Equations and their
  Applications}.
\newblock Number 1702 in Lecture Notes in Mathematics. Springer, 1st edition,
  1999.
\newblock ISBN 978-3540659600.

\bibitem[Pardoux and Peng(1990)]{pardouxPeng1990}
Etienne Pardoux and Shige Peng.
\newblock Adapted solution of a backward stochastic differential equation.
\newblock \emph{System and Control Letters}, 14\penalty0 (1):\penalty0 55--61,
  January 1990.
\newblock \doi{10.1016/0167-6911(90)90082-6}.
\newblock URL \url{https://doi.org/10.1016/0167-6911(90)90082-6}.

\bibitem[Piterbarg(2010)]{piterbarg2010funding}
Vladimir Piterbarg.
\newblock Funding beyond discounting: collateral agreements and derivatives
  pricing.
\newblock \emph{Risk}, 23\penalty0 (2):\penalty0 97, 2010.

\bibitem[Platen and Heath(2006)]{platen2006benchmark}
Eckhard Platen and David Heath.
\newblock \emph{A benchmark approach to quantitative finance}.
\newblock Springer Finance. Springer, 2006.
\newblock ISBN 978-3642065651.

\end{thebibliography}

\end{document}